\newcommand{\R}{\mathbb R}
\newcommand{\HH}{\mathcal H}
\newcommand{\N}{\mathbb N}
\newcommand{\RX}{\,]-\infty,+\infty]}
\newcommand{\RM}{\,]-\infty,0]}
\newcommand{\RPP}{\ensuremath{\left]0,+\infty\right[}}
\newcommand{\RP}{\ensuremath{\left[0,+\infty\right[}}
\newcommand{\menge}[2]{\left\{{#1}~\middle|~{#2}\right\}} 
\newcommand{\scal}[2]{{\left\langle{#1}~\middle|~{#2}\right\rangle}}
\newcommand{\ifaf}{\:\Leftrightarrow\:}
\newcommand{\norm}[1]{\left\|#1\right\|}
\newcommand{\persp}[1]{\widetilde{#1}}
\newcommand{\abs}[1]{\left| #1 \right|}
\def\overbar#1{\ThisStyle{%
  \setbox0=\hbox{$\SavedStyle#1$}%
  \stackengine{1.2\LMpt}{$\SavedStyle#1$}{\rule{\wd0}{.8\LMpt}}{O}{c}{F}{F}{S}%
}}
\newcommand{\cdom}{\overbar{\dom}\,}
\def\keywords#1{\par\addvspace\medskipamount{
\def\and{
	\ifhmode\unskip\nobreak\fi\ $\cdot$
}
\noindent\keywordname.\enspace\ignorespaces#1\par}
}
\def\keywordname{{\bfseries Keywords}}
\DeclareMathOperator{\epi}{epi}
\DeclareMathOperator{\dom}{dom}
\DeclareMathOperator*{\rec}{rec}
\DeclareMathOperator{\prox}{prox}
\DeclareMathOperator{\Id}{Id}
\DeclareMathOperator*{\argmin}{arg\,min}
\newcommand{\emp}{\ensuremath{{\varnothing}}}
\newtheorem{theorem}{Theorem}[section]
\newtheorem{proposition}{Proposition}[section]
\newtheorem{lemma}{Lemma}[section]
\theoremstyle{definition}
\newtheorem{definition}{Definition}[section]
\newtheorem{example}{Example}[section]
\newtheorem{remark}{Remark}[section]
\newtheorem{test}{Test}[section]
\numberwithin{equation}{section}
\title{\Large \textbf{Projection onto cones generated by epigraphs 
of perspective functions}}
\author{Luis M. Brice\~no-Arias$^1$, Crist\'obal Vivar-Vargas$^2$\\[.5cm]
\normalsize{$^1$Universidad T\'ecnica Federico Santa Mar\'ia, Departamento de Matem\'atica, Santiago, Chile} \\
\normalsize{\texttt{\href{mailto:luis.briceno@usm.cl}{luis.briceno@usm.cl}}} \\[2.5mm]
\normalsize{$^2$Universidad T\'ecnica Federico Santa Mar\'ia, Departamento de Matem\'atica, Santiago, Chile} \\
\normalsize{\texttt{\href{mailto:cristobal.vivar@usm.cl}{cristobal.vivar@usm.cl}}}}
\date{}
\begin{document}
\begin{titlepage}
	\centering
	\maketitle
	\vspace*{1cm}
\begin{abstract}
In this paper we provide an efficient computation of the projection 
onto the cone generated 
by the epigraph of the perspective of any convex lower 
semicontinuous function. 
Our formula requires solving only two scalar equations involving 
the proximity operator of the function. This enables the 
computation of projections, for instance, onto exponential and 
power cones, and extends to previously unexplored conic 
projections, such as the projection onto the hyperbolic
cone. We compare numerically 
the efficiency of the proposed approach in the case of 
exponential cones with an open source available method in the 
literature, 
illustrating its efficiency.
\end{abstract}

\keywords{Convex analysis\and Epigraph projection\and 
Exponential cone \and Fenchel conjugate  \and\\ Perspective function \and Proximity operator}
\textbf{MSC (2020)}: 46N10 $\cdot$ 47J20 $\cdot$ 49J53 $\cdot$ 49N15 $\cdot$ 90C25.
\end{titlepage}
\section{Introduction}
\label{s:1}
The perspective of a convex lower semicontinuous
function $f$ defined in a real Hilbert space $\HH$, denoted by 
$\widetilde{f}$, is a construction 
introduced in 
\cite{rockafellar1966level} and its epigraph turns out to be a closed 
convex cone.
The perspective appears naturally
in optimal mass transportation theory 
\cite{benamou2000computational,villani2021topics},
dynamical formulation of the 2-Wasserstein distance 
\cite{benamou2000computational,villani2021topics}, information 
theory \cite{elGheche2017proximity}, physics \cite{bercher2013}, 
operator theory \cite{effros2009}, statistics \cite{owen2007}, 
matrix 
analysis \cite{dacorogna2008},
signal processing and inverse problems 
\cite{kuroda2021convex,kuroda2022block,micchelli2013},
JKO \cite{jordan1998variational} schemes for gradient flows in 
the 
space of probability measures 
\cite{benamou2016augmented,carrillo2022primal}, and 
transportation and mean field games problems with 
state-dependent potentials
\cite{briceno2018proximal,cardaliaguet2015weak}, among other 
disciplines. 

In the particular case when 
$\HH=\R$, several known cones appearing in conic and 
mathematical programming problems are the 
epigraph of $\widetilde{f}$ for particular choices of $f$.
For instance, if  $f$ is the exponential function, this epigraph 
corresponds to 
the exponential cone $\mathcal{K}_{\exp}$, which appears in 
problems involving entropy 
functions, softmax 
and softplus activation functions from neural networks, and 
generalized posynomials 
in geometric programming 
\cite{mosek2022paper,friberg2023,Chares2009}. On the other 
hand, when $f$ is a power function, the epigraph of $\widetilde{f}$
is the well known power cone 
\cite{mosek2022paper,Chares2009,ECOS}.
Cones obtained as epigraphs of perspective functions appear 
naturally in conic reformulations of convex 
optimization problems (see Section~\ref{s:2} and, e.g., 
\cite{glineur2001conic}), but their projection has been only 
studied in particular cases. The projection onto the exponential 
cone has been recently developed in \cite{friberg2023}, which is 
used in known conic software packages 
\cite{scs2016,mosek2022paper,cosmo2021,ECOS}.


In this paper we provide an efficient computation of the projection 
onto the cone generated 
by the epigraph of the perspective of any convex lower 
semicontinuous function. 
Our formula involves the resolution of two scalar equations 
in which the
proximity operator of the function appears. We illustrate the 
efficiency of 
the proposed approach by comparing it with a state-of-the-art 
open-source
algorithm in the case of the exponential cone. Moreover, its 
flexibility is 
highlighted by providing the 
projection onto the epigraph of the 
perspective of an 
hyperbolic penalization function, which cannot be tackled by 
existing 
methods.

{\sloppy The manuscript is organized as follows. In Section~\ref{s:2},
we provide the notation and preliminaries on 
perspective functions, including a motivation in mathematical 
programming for the projection onto the epigraph of the
perspective. In Section~\ref{s:3} we exhibit our main result and a 
version for radial functions. In addition, we compute explicit 
formulae for 
the exponential cone and the hyperbolic cone and we 
give a bisection procedure for obtaining the 
projection to the epigraph of a perspective with explicit error 
bounds. Numerical comparisons in three 
different tests are 
provided in Section~\ref{s:4}.

}

\section{Notation and preliminaries}
\label{s:2}
Throughout this paper, $\mathcal{H}$ is a real Hilbert space 
endowed with the inner product $\scal{\cdot}{\cdot}$ and 
associated norm $\norm{\cdot}$. $\HH \oplus \R$ denotes the 
Hilbert direct sum between $\HH$ and $\R$.

Given $f: \HH \to \RX$, the domain of $f$ is $\dom f = \menge{x \in 
\HH} {f(x) < +\infty}$ and $f$ is proper if $\dom f\neq \emp$. 
Denote by $\Gamma_0(\mathcal{H})$ the class of proper lower semicontinuous convex functions from 
$\HH$ to $]-\infty, +\infty]$ and let $f \in \Gamma_0(\HH)$. 
The recession function of $f$ is
\begin{equation}
	\label{e:recession}
	(\forall x_0 \in \dom f) (\forall x \in \HH)\quad (\rec f)(x) = \lim_{t 
	\to +\infty} \frac{f(x_0 + t x) - f(x_0)}{t}
\end{equation}
and its perspective, which has a central role in this manuscript, is 
introduced in the following definition.
\begin{definition}\label{def:perspective} Let $f \in 
\Gamma_0(\HH)$. The \textit{perspective} of $f$ is:
\begin{equation}
	\label{e:persp_def}
	\widetilde{f}\colon\HH\times\R\to\RX\colon
	(x,\eta) \mapsto 
	\begin{cases}
		\eta f \left(\dfrac{x}{\eta}\right),&\text{if}\:\:\eta > 0; \\[3mm]
		(\rec f) (x), & \text{if}\:\:\eta = 0; \\[2mm]
		+\infty, & \text{if}\:\:\eta < 0.
	\end{cases}    
\end{equation}
\end{definition} 
The conjugate of $f$ is 
\begin{equation}
	\label{e:conjugate}
	f^*: \HH \to \RX: u \mapsto \sup_{x \in \HH} \left(\scal{x}{u} - f(x)\right).
\end{equation} 
We have $f^*\in\Gamma_0(\HH)$, $f^{**} = f$, and 
\begin{equation}
	\label{e:fenchel_young}
	(\forall x \in \HH)(\forall u \in \HH) \quad f(x) + f^*(y) \geq 
	\scal{x}{u},
\end{equation}
known as the 
Fenchel-Young inequality 
\cite[Proposition~13.15]{bauschke2011convex}.
The function $f$ is supercoercive if 
$$\lim_{\|x\|\to+\infty}\frac{f(x)}{\|x\|}=+\infty,$$ in which case 
$\dom 
f^*=\HH$ \cite[Proposition~14.15]{bauschke2011convex}.
The subdifferential of $f$ is the set-valued operator
\begin{equation}
	\label{e:subdifferential}
	\partial f: \HH \to 2^{\HH} : x \mapsto 
	\menge{u \in \HH}{(\forall y \in \HH) \quad \scal{y-x}{u} + f(x) \leq 
	f(y)},
\end{equation}  
which satisfies
the Fenchel-Young identity 
\cite[Proposition~16.10]{bauschke2011convex}
\begin{equation}
	\label{e:fenchel_young_iden}
	(\forall x \in \HH)(\forall u \in \HH) \quad u \in \partial f(x) 
	\quad\Leftrightarrow\quad f(x) + f^*(u) = \scal{x}{u},
\end{equation}
and $\dom \partial f=\menge{x\in\HH}{\partial f(x)\neq\emp}$.
The \textit{proximity operator} of $f$ is
\begin{equation}
	\label{e:prox_def}
	\prox_f : \HH \to \HH : x \mapsto \argmin_{y \in \HH} \left(f(y) + \frac{1}{2} \norm{x-y}^2\right),
\end{equation}
which is characterized by
\begin{equation}
	\label{e:prox_char}
	(\forall x \in \HH)(\forall p \in \HH) \quad p = \prox_f x \quad\Leftrightarrow\quad x-p \in \partial f(p)
\end{equation}
and satisfies \cite[Proposition~24.8]{bauschke2011convex}
\begin{equation}
	\label{e:moreau_decomp}
	(\forall\gamma \in\RPP)\quad  \prox_{\gamma f} =\Id - \gamma \prox_{f^*/\gamma}\circ (\Id/\gamma),
\end{equation}
where $\Id\colon\HH\to\HH$ denotes the identity operator.

Let $C\subset\HH$ be a nonempty closed convex set. The indicator function of $C$ is
\begin{equation}
	\label{e:indicator}
	\iota_C: \HH \to \RX: x \mapsto 
	\begin{cases}
		0, &  \text{if }x \in C;\\
		+\infty, &\text{if }x \not\in C,    
	\end{cases}   
\end{equation}
its support function is
\begin{equation}
	\label{e:support}
	\sigma_C : \HH \to\RX: u \mapsto\sup_{x \in C} \scal{x}{u}, 
\end{equation}
we have $\sigma_C= (\iota_C)^*$, and the projection operator 
onto $C$ is $P_C=\prox_{\iota_C}$.
For further background on convex analysis, the reader is referred 
to \cite{bauschke2011convex}. 
Note that \cite[Proposition~7.13 \& 
Proposition~13.49]{bauschke2011convex} imply
\begin{equation}
	\label{e:recession_char}
	\rec{f} = \sigma_{\dom f^*} = \sigma_{\;\cdom f^*}.
\end{equation}

The following lemma is useful in the construction of synthetic data 
for the numerical tests in Section~\ref{s:4}.
\begin{lemma}
	\label{l:normal}
	Let $f \in \Gamma_0(\HH)$ and let $x\in \dom \partial f$. Set
	\begin{equation}
		\label{e:normal}
		(\tilde{x},\tilde{\delta}) \in (x,f(x)) + \RPP\cdot
		(\partial f(x) \times \{-1\}).
	\end{equation}
	Then $(\tilde{x},\tilde{\delta})\notin\epi f$ and 
	$P_{\epi f}(\tilde{x},\tilde{\delta}) = 
	(x,f(x))$.
\end{lemma}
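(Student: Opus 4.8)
The plan is to exploit the characterization of the projection onto the epigraph of a convex function in terms of the normal cone, and to recognize that the displayed condition \eqref{e:normal} is precisely a statement that the direction from $(\tilde x,\tilde\delta)$ back to $(x,f(x))$ lies in the normal cone to $\epi f$ at $(x,f(x))$. Concretely, write $(\tilde x,\tilde\delta)=(x,f(x))+\gamma(u,-1)$ for some $\gamma>0$ and some $u\in\partial f(x)$. First I would check that $(\tilde x,\tilde\delta)\notin\epi f$: since $\tilde\delta=f(x)-\gamma<f(x)$ and $\tilde x$ need not equal $x$, one has $\tilde\delta<f(x)\le f(\tilde x)$ is not immediate, but using $u\in\partial f(x)$ and the subgradient inequality \eqref{e:subdifferential} gives $f(\tilde x)\ge f(x)+\scal{\tilde x-x}{u}=f(x)+\gamma\norm{u}^2\ge f(x)>f(x)-\gamma=\tilde\delta$, so indeed $(\tilde x,\tilde\delta)\notin\epi f$.

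Next I would establish the projection identity. Since $\epi f$ is a nonempty closed convex subset of $\HH\oplus\R$, the point $P_{\epi f}(\tilde x,\tilde\delta)$ is characterized by the variational inequality: $p=(x,f(x))$ is the projection if and only if $p\in\epi f$ and $\scal{(\tilde x,\tilde\delta)-p}{q-p}\le 0$ for every $q\in\epi f$. Clearly $(x,f(x))\in\epi f$. For the inequality, take an arbitrary $(y,\mu)\in\epi f$, so $f(y)\le\mu$, and compute
\begin{equation}
	\label{e:normal_proof}
	\scal{(\tilde x,\tilde\delta)-(x,f(x))}{(y,\mu)-(x,f(x))}
	=\gamma\scal{u}{y-x}-\gamma(\mu-f(x)).
\end{equation}
By the subgradient inequality $\scal{u}{y-x}\le f(y)-f(x)\le\mu-f(x)$, hence the right-hand side of \eqref{e:normal_proof} is $\le\gamma(\mu-f(x))-\gamma(\mu-f(x))=0$. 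This verifies the variational characterization, so $P_{\epi f}(\tilde x,\tilde\delta)=(x,f(x))$.

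I do not expect any serious obstacle here; the only mild subtlety is being careful that \eqref{e:normal} is an \emph{inclusion} (the product $\partial f(x)\times\{-1\}$ is a set), so one must fix a representative $u\in\partial f(x)$ and a scalar $\gamma>0$ at the outset and carry them through both parts of the argument. The argument uses only the defining subgradient inequality and the standard obtuse-angle characterization of projections onto closed convex sets in a Hilbert space, applied in $\HH\oplus\R$; the normal-cone language is optional and could be omitted in favor of the direct computation above.
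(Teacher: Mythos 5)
Your proposal is correct and is essentially the paper's argument made explicit: the paper simply cites \cite[Proposition~16.16]{bauschke2011convex} (that $u\in\partial f(x)$ means $(u,-1)$ is normal to $\epi f$ at $(x,f(x))$) and \cite[Proposition~6.47]{bauschke2011convex} (the normal-cone, i.e.\ variational-inequality, characterization of the projection), which are exactly the two facts your subgradient-inequality computation verifies by hand. No gaps.
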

\begin{proof}
Clear 
	from \cite[Proposition~16.16]{bauschke2011convex} and 
	\cite[Proposition~6.47]{bauschke2011convex}.
\end{proof}


\subsection{Motivation}
Let $n \in \N$, let $\{f_i\}_{i=0}^n \subset \Gamma_0(\HH)$, and 
consider the constrained mathematical programming problem
\begin{equation}
	\label{e:model}
	\min_{x \in C} f_0(x),
\end{equation}
where $C = \menge{x \in \HH}{(\forall i \in \{1,\ldots,n\})\:\: f_i(x)\le 
0}$.
In order to solve these type of problems, a class of epigraphical 
first-order methods (see, e.g., 
\cite{chierchia2015epigraphical,tofighi2014denoising,tofighi2014denoising,wang2016epigraph})
 use a sequence of projections onto the epigraphs of the functions 
$\{f_i\}_{i=0}^n$ by reformulating \eqref{e:model} equivalently as
\begin{equation}
	\label{e:model_epi}
	\min_{(x,\delta) \in\,\epi f_0\, \cap\, (C\times \R)} \delta.
\end{equation}
Moreover, in conic optimization \cite{glineur2001conic}, the 
constraint in \eqref{e:model_epi} is relaxed to its closed conical 
hull, which is strongly related to the epigraphs of the 
perspectives (see \eqref{e:persp_def}) of $\{f_i\}_{i = 0}^n$. 
Indeed, note that
\begin{align}
	\label{e:model_cone}
	\RP\cdot(\epi f_0 \cap C\times \R) & =  \bigcup_{\eta \in 
	\RPP}\menge{(x,\delta) \in \HH\times \R}{(x/\eta, \delta/\eta) \in 
	\epi f_0 \cap C\times \R} \nonumber \\
	& = \bigcup_{\eta \in \RPP}\menge{(x,\delta) \in \HH\times 
	\R}{ \mathcal{P}_{f_0}(x,\eta) \le \delta,\:(\forall 
	i \in \{1,\ldots,n\})\:\:  \mathcal{P}_{f_i}(x,\eta)\le 0},
\end{align}
where
\begin{equation}
	(\forall i \in \{1,\ldots,n\}) \quad \mathcal{P}_{f_i}: \HH\times \R 
	\to \RX :(x,\eta) \mapsto\begin{cases}
		\eta f_i\left(\dfrac{x}{\eta}\right), &\text{if}\:\: \eta > 0;\\
		+\infty, &\text{if}\:\: \eta \le 0.
	\end{cases} 
\end{equation}
Furthermore, \cite{rockafellar1966level} asserts that, for every 
$i\in\{0,\ldots,n\}$, $\overline{\epi
\mathcal{P}_{f_i}}=\epi\widetilde{f_i}$, which implies
\begin{equation}
	\overline{\RP\cdot(\epi f_0 \cap C\times \R)} = \bigcup_{\eta \in 
	\RPP}\menge{(x,\delta) \in \HH\times \R}{(x,\eta,\delta) \in \epi 
	\widetilde{f_0} \:\: \text{and}\:\: (x,\eta,0) \in \bigcap_{i=1}^n \epi 
	\widetilde{f_i}}.
\end{equation}
This motivates several first order approaches 
(see, e.g., 
\cite{cosmo2021,scs2016,ceria1999convex,zhang2004new}) 
using
the computation of the projections 
$P_{\epi\widetilde{f_0}},\ldots,P_{\epi\widetilde{f_n}}$ to solve 
the conic formulation \eqref{e:model_epi}.

\subsection{Perspective functions and properties}
Now, we review essential properties of perspective functions. We 
refer the reader to \cite{combettes2018perspective} for further 
background.
\begin{lemma}
\label{l:persp_prop}
Let $f \in \Gamma_0(\HH)$. Then the following hold:
\begin{enumerate}[label = \normalfont(\roman*)]
	\item
	\label{l:persp_prop_i}
	$\persp{f} \in \Gamma_{0}(\HH\oplus \R)$.
	\item
	\label{l:persp_prop_iii}
	Let $C = \menge{(x,\eta) \in \HH \times \R}{\eta + f^*(x) \leq 0}$. Then $\left(\persp{f}\right)^* = \iota_C$. 
	\item 
	\label{l:persp_prop_iv}
	$\epi \widetilde{f}$ is a closed convex cone, i.e., $\epi 
	\widetilde{f} = \RP\cdot 
	\epi 
	\widetilde{f}$.
\end{enumerate}
\end{lemma}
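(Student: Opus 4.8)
My plan is to reduce all three items to a single identity, namely $\widetilde f=\sigma_C$, where $C=\menge{(u,v)\in\HH\times\R}{v+f^*(u)\le 0}$ is the set appearing in \ref{l:persp_prop_iii}. Once this is proved, everything falls out quickly. First, $C$ is nonempty: since $f^*\in\Gamma_0(\HH)$ we may pick $u_0\in\dom f^*$, and then $(u_0,-f^*(u_0))\in C$. It is closed and convex, being the $0$-sublevel set of the lower semicontinuous convex function $(u,v)\mapsto v+f^*(u)$. Hence $\iota_C\in\Gamma_0(\HH\oplus\R)$ and $\widetilde f=\sigma_C=(\iota_C)^*$, which gives \ref{l:persp_prop_i} (the conjugate of a proper lower semicontinuous convex function is again in $\Gamma_0$) and \ref{l:persp_prop_iii} (by Fenchel--Moreau biconjugation, $\big(\widetilde f\big)^*=(\iota_C)^{**}=\iota_C$). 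For \ref{l:persp_prop_iv}, $\sigma_C$ is a support function, hence positively homogeneous with $\sigma_C(0,0)=0$, and, being also convex, it is sublinear; therefore $\epi\widetilde f=\epi\sigma_C$ is a convex cone, and it is closed because $\sigma_C$ is lower semicontinuous. Equivalently, one checks by hand that $(\lambda x,\lambda\eta,\lambda\delta)\in\epi\widetilde f$ for all $\lambda\in\RPP$ whenever $(x,\eta,\delta)\in\epi\widetilde f$, together with $(0,0,0)\in\epi\widetilde f$ since $(\rec f)(0)=0$.

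The core of the argument is thus the identity $\widetilde f=\sigma_C$, and I would prove it by computing $\sigma_C(x,\eta)=\sup_{(u,v)\in C}\big(\scal{x}{u}+\eta v\big)$ case by case in $\eta$. If $\eta<0$, sending $v\to-\infty$ within $C$ forces $\sigma_C(x,\eta)=+\infty$, matching the definition of $\widetilde f$. If $\eta=0$, the supremum is $\sup_{u\in\dom f^*}\scal{x}{u}=\sigma_{\dom f^*}(x)=(\rec f)(x)$ by \eqref{e:recession_char}, again matching $\widetilde f(x,0)$. If $\eta>0$, the inner supremum over $v\le -f^*(u)$ is attained at $v=-f^*(u)$, so $\sigma_C(x,\eta)=\sup_{u\in\HH}\big(\scal{x}{u}-\eta f^*(u)\big)=\eta\sup_{u\in\HH}\big(\scal{x/\eta}{u}-f^*(u)\big)=\eta f^{**}(x/\eta)=\eta f(x/\eta)$, using $f^{**}=f$; this is exactly $\widetilde f(x,\eta)$.

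The step I expect to need the most care is the boundary case $\eta=0$: one has to recognize that \eqref{e:recession_char} is precisely what pairs the value $(\rec f)(x)$ of $\widetilde f$ on the hyperplane $\eta=0$ with the description of $C$, and one must not overlook checking $C\neq\emp$, since the whole reduction rests on $\iota_C$ and $\sigma_C$ being proper and mutually conjugate. The remaining cases, and the passage from $\widetilde f=\sigma_C$ to \ref{l:persp_prop_i}, \ref{l:persp_prop_iii}, \ref{l:persp_prop_iv}, are routine conjugacy bookkeeping. A fully referenced alternative would be to invoke the corresponding statements in \cite{combettes2018perspective}.
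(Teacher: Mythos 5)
Your proof is correct, but it proceeds differently from the paper: the paper disposes of all three items by citation, invoking \cite[Proposition~2.3(i),(ii),(iv)]{combettes2018perspective} for items \ref{l:persp_prop_i} and \ref{l:persp_prop_iii} and for positive homogeneity, and then \cite[Proposition~6.2]{bauschke2011convex} to conclude that the epigraph is a closed convex cone. You instead establish the single identity $\widetilde f=\sigma_C$ by a direct three-case computation of $\sup_{(u,v)\in C}\big(\scal{x}{u}+\eta v\big)$ (using $C\neq\emp$, the attainment $v=-f^*(u)$ when $\eta>0$ together with $f^{**}=f$, and \eqref{e:recession_char} on the boundary $\eta=0$), and then read off \ref{l:persp_prop_i} from $\sigma_C=(\iota_C)^*$ with $\iota_C\in\Gamma_0(\HH\oplus\R)$, \ref{l:persp_prop_iii} from Fenchel--Moreau biconjugation, and \ref{l:persp_prop_iv} from sublinearity, lower semicontinuity, and $\sigma_C(0,0)=0$. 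The computation is sound: each case matches Definition~\ref{def:perspective}, the properness check ($C\neq\emp$ via $u_0\in\dom f^*$) is exactly the point that must not be skipped, and the cone argument correctly includes the vertex. What your route buys is a self-contained proof that makes transparent why the conjugate is the indicator of $C$ (it is the very set whose support function $\widetilde f$ is), essentially reproving the cited results of \cite{combettes2018perspective}; what the paper's route buys is brevity and delegation of the boundary case $\eta=0$ (where lower semicontinuity of the perspective is the delicate issue) to the literature. Either is acceptable here.
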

\begin{proof}
	\ref{l:persp_prop_i}: \cite[Proposition~2.3(ii)]{combettes2018perspective}.
%
	
	\ref{l:persp_prop_iii}: 
	\cite[Proposition~2.3(iv)]{combettes2018perspective}.
	
	\ref{l:persp_prop_iv} In view of \ref{l:persp_prop_i} and 
	\cite[Proposition~2.3(i)]{combettes2018perspective} the claim 
	follows from \cite[Proposition~6.2]{bauschke2011convex}.
\end{proof}
The following result is a slight modification of 
\cite[Theorem~3.1]{bricenoarias2023enhanced} and it is crucial
for our main result.
\begin{proposition}
	\label{l:prox_persp}
	Let $f \in \Gamma_0(\HH)$, let $\gamma \in \RPP$, and let 
	$(x,\eta) \in \HH \times \R$. Then the following hold:
	\begin{enumerate}[label = \normalfont(\roman*)]
		\item 
		\label{l:prox_persp_i}
		Suppose that $\eta + \gamma f^* \big(P_{\;\cdom f^*} \, 
		(x/\gamma)\big) \leq 0$. Then
		\begin{equation}
			\label{e:prox_persp_i}
			\prox_{\gamma \widetilde{f}}(x,\eta) = \left(x - \gamma 
			P_{\; \cdom f^*} \left(\frac{x}{\gamma}\right), 0\right). 
		\end{equation} 
		\item 
		\label{l:prox_persp_ii}   
		Suppose that $\eta + \gamma f^* \big(P_{\;\cdom f^*} 
		\,(x/\gamma)\big) > 0$. Then there exists a unique 
		$\mu\in\big]0,\eta + \gamma f^* \big(P_{\;\cdom f^*} 
		\,(x/\gamma)\big)\big]\cap\R$ such that
		\begin{equation}
			\label{e:prox_persp_iimu}
			\mu = \eta + \gamma f^* 
			\left(\prox_{\frac{\mu}{\gamma}f^*}\left(\frac{x}{\gamma}\right)\right).
		\end{equation} 
		Furthermore
		\begin{equation}
			\label{e:prox_persp_ii}
			\prox_{\gamma \widetilde{f}}(x,\eta) = \left(\mu 
			\prox_{\frac{\gamma}{\mu} f} 
			\left(\frac{x}{\mu}\right),\mu\right).
		\end{equation}
	\end{enumerate}
\end{proposition}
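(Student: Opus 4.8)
The plan is to pass to the Fenchel dual. Fix $\gamma\in\RPP$. By Lemma~\ref{l:persp_prop}\ref{l:persp_prop_iii}, $(\widetilde{f})^{*}=\iota_{C}$ with $C=\menge{(u,\zeta)\in\HH\times\R}{\zeta+f^{*}(u)\le 0}$; since $\iota_{C}/\gamma=\iota_{C}$, Moreau's decomposition \eqref{e:moreau_decomp} applied to $\widetilde{f}$ gives $\prox_{\gamma\widetilde{f}}(x,\eta)=(x,\eta)-\gamma\,P_{C}(x/\gamma,\eta/\gamma)$, so the whole statement reduces to computing the projection of $(x/\gamma,\eta/\gamma)$ onto the nonempty closed convex set $C$.

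For case \ref{l:prox_persp_i} I would argue directly. The hypothesis is exactly $f^{*}(P_{\cdom f^{*}}(x/\gamma))+\eta/\gamma\le 0$, so $(P_{\cdom f^{*}}(x/\gamma),\eta/\gamma)\in C$; and every $(u,\zeta)\in C$ has $u\in\dom f^{*}\subseteq\cdom f^{*}$, whence $\norm{u-x/\gamma}\ge\norm{P_{\cdom f^{*}}(x/\gamma)-x/\gamma}$ and $(\zeta-\eta/\gamma)^{2}\ge 0$. Therefore $P_{C}(x/\gamma,\eta/\gamma)=(P_{\cdom f^{*}}(x/\gamma),\eta/\gamma)$, and substituting into the identity above yields \eqref{e:prox_persp_i}.

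For case \ref{l:prox_persp_ii} the bulk of the work is the scalar equation. With $\lambda=\mu/\gamma$, \eqref{e:prox_persp_iimu} reads $\lambda=\eta/\gamma+f^{*}(\prox_{\lambda f^{*}}(x/\gamma))$, so I would study $\Phi\colon\RPP\to\R$, $\lambda\mapsto\lambda-\eta/\gamma-f^{*}(\prox_{\lambda f^{*}}(x/\gamma))$. A standard comparison of the minimization problems defining $\prox_{\lambda_{1}f^{*}}(x/\gamma)$ and $\prox_{\lambda_{2}f^{*}}(x/\gamma)$ for $0<\lambda_{1}<\lambda_{2}$ shows that $\lambda\mapsto\norm{\prox_{\lambda f^{*}}(x/\gamma)-x/\gamma}$ is nondecreasing and $\lambda\mapsto f^{*}(\prox_{\lambda f^{*}}(x/\gamma))$ is nonincreasing; hence $\Phi$ is continuous and strictly increasing, which gives uniqueness. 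Since $\prox_{\lambda f^{*}}(x/\gamma)$ minimizes $f^{*}(\cdot)+\tfrac1{2\lambda}\norm{\cdot-x/\gamma}^{2}$, evaluating at $P_{\cdom f^{*}}(x/\gamma)$ and using the first monotonicity gives $f^{*}(\prox_{\lambda f^{*}}(x/\gamma))\le f^{*}(P_{\cdom f^{*}}(x/\gamma))$ for all $\lambda>0$; together with the standard limiting property $\prox_{\lambda f^{*}}(x/\gamma)\to P_{\cdom f^{*}}(x/\gamma)$ as $\lambda\downarrow 0$ and lower semicontinuity of $f^{*}$, this forces $\Phi(\lambda)\to-\eta/\gamma-f^{*}(P_{\cdom f^{*}}(x/\gamma))<0$ as $\lambda\downarrow 0$ (the sign is the case hypothesis), whereas $\Phi\ge 0$ at $\lambda=\eta/\gamma+f^{*}(P_{\cdom f^{*}}(x/\gamma))$ when this number is finite, and $\Phi(\lambda)\to+\infty$ as $\lambda\to+\infty$ otherwise. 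Hence $\Phi$ has a unique zero in $\big]0,\eta/\gamma+f^{*}(P_{\cdom f^{*}}(x/\gamma))\big]\cap\R$, and $\mu=\gamma\lambda$ is as claimed.

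To finish, setting $\bar u=\prox_{\lambda f^{*}}(x/\gamma)$ and $\bar\zeta=-f^{*}(\bar u)=\eta/\gamma-\lambda$ (the last equality being the scalar equation), I would check $(\bar u,\bar\zeta)=P_{C}(x/\gamma,\eta/\gamma)$ via the variational characterization of the projection: $(\bar u,\bar\zeta)\in C$, and from $x/\gamma-\bar u\in\lambda\,\partial f^{*}(\bar u)$ (by \eqref{e:prox_char}) the subgradient inequality gives, for every $(u,\zeta)\in C$, $\zeta\le-f^{*}(u)\le\bar\zeta-\tfrac1\lambda\scal{x/\gamma-\bar u}{u-\bar u}$, i.e.\ $\scal{x/\gamma-\bar u}{u-\bar u}+(\eta/\gamma-\bar\zeta)(\zeta-\bar\zeta)\le 0$ since $\eta/\gamma-\bar\zeta=\lambda$. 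Plugging this into $\prox_{\gamma\widetilde{f}}(x,\eta)=(x,\eta)-\gamma P_{C}(x/\gamma,\eta/\gamma)$ yields last coordinate $\gamma\lambda=\mu$ and first coordinate $x-\gamma\prox_{\lambda f^{*}}(x/\gamma)$; applying \eqref{e:moreau_decomp} to $f^{*}$ (so that $f^{**}=f$) rewrites $\prox_{\lambda f^{*}}(x/\gamma)=x/\gamma-\lambda\prox_{f/\lambda}(x/\mu)$, whence the first coordinate equals $\mu\prox_{(\gamma/\mu)f}(x/\mu)$ and \eqref{e:prox_persp_ii} follows. The main obstacle is the third paragraph: proving solvability of the scalar equation and, above all, that its solution lies in the stated half-open interval; this is exactly where the boundary of $\dom f^{*}$ enters, and it is why $P_{\cdom f^{*}}$, not a plain evaluation of $f^{*}$, appears in the statement.
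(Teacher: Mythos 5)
Your proposal is correct, but it is not the route the paper takes: the paper offers no proof of this proposition at all, importing it as a ``slight modification'' of \cite[Theorem~3.1]{bricenoarias2023enhanced}, so what you have done is reconstruct that external result from scratch. Your reconstruction is sound: Moreau's decomposition \eqref{e:moreau_decomp} together with Lemma~\ref{l:persp_prop}\ref{l:persp_prop_iii} reduces everything to $P_C(x/\gamma,\eta/\gamma)$ with $C=\menge{(u,\zeta)}{\zeta+f^*(u)\le 0}$; in case \ref{l:prox_persp_i} the candidate $(P_{\,\cdom f^*}(x/\gamma),\eta/\gamma)$ lies in $C$ and is distance-minimal because every $(u,\zeta)\in C$ has $u\in\dom f^*\subset\cdom f^*$; in case \ref{l:prox_persp_ii} the function $\Phi$ is strictly increasing, negative near $0$ (by $\prox_{\lambda f^*}(x/\gamma)\to P_{\,\cdom f^*}(x/\gamma)$, lower semicontinuity, and the case hypothesis), and eventually nonnegative on the stated interval, and the verification that $(\bar u,\bar\zeta)$ satisfies the variational characterization of $P_C$ via $x/\gamma-\bar u\in\lambda\partial f^*(\bar u)$ is exactly right, as is the final Moreau switch from $\prox_{\lambda f^*}$ to $\prox_{(\gamma/\mu)f}$. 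The benefit of your approach is that it makes the paper self-contained; what the citation buys the authors is not having to justify the two ``standard'' facts you invoke, which you should pin down if you write this out: continuity of $\lambda\mapsto f^*\big(\prox_{\lambda f^*}(x/\gamma)\big)$ and the limit $\prox_{\lambda f^*}(x/\gamma)\to P_{\,\cdom f^*}(x/\gamma)$ as $\lambda\downarrow 0$ are, e.g., \cite[Lemma~3.27]{attouch1984variational} and \cite[Theorem~23.48]{bauschke2011convex}. One small slip: the bound $f^*\big(\prox_{\lambda f^*}(x/\gamma)\big)\le f^*\big(P_{\,\cdom f^*}(x/\gamma)\big)$ does not come from the monotonicity in $\lambda$ but from comparing the prox objective at $P_{\,\cdom f^*}(x/\gamma)$ together with the projection inequality $\norm{\prox_{\lambda f^*}(x/\gamma)-x/\gamma}\ge\norm{P_{\,\cdom f^*}(x/\gamma)-x/\gamma}$, which holds since $\prox_{\lambda f^*}(x/\gamma)\in\cdom f^*$; with that phrasing fixed, the argument goes through.
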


The following result is a refinement of Proposition~\ref{l:prox_persp} 
in the case 
when the function is radial and is also a slight modification of 
\cite[Proposition~3.3]{bricenoarias2023enhanced}.
\begin{proposition}
\label{pro:plc_radial} 
Let $\varphi \in \Gamma_0(\R)$ be even, 
set $f = \varphi \circ \norm{\cdot}$, let $\gamma \in \RPP$, and let 
$(x,\eta) \in \HH \times \R$. Then the following hold:
    \begin{enumerate}[label = \normalfont(\roman*)]
\item 
\label{pro:plc_radial_i}
Suppose that $\eta + \gamma \varphi^* \big(P_{\;\cdom 
\varphi^*}\, 
(\norm{x}/\gamma)\big) \leq 0$. Then
    \begin{equation}
        \prox_{\gamma \widetilde{f}}(x,\eta) = 
        \begin{cases}
        \left(\left(1 - \gamma \dfrac{P_{\; \cdom \varphi^*} 
        \left(\|x\|/\gamma\right)}{\|x\|}\right)x, 0\right),&\text{if}\:\:x\neq 0;\\[2mm]
        (0,0),&\text{if}\:\:x=0.
        \end{cases} \label{eq:plc_radial_leq0}
    \end{equation} 
\item 
\label{pro:plc_radial_ii}
    Suppose that $\eta + \gamma \varphi^* \big(P_{\;\cdom 
    \varphi^*}\,  
    (\norm{x}/\gamma)\big) > 0$. Then there exists a unique 
    $\mu\in\big]0,\eta + \gamma \varphi^* \big(P_{\;\cdom 
    \varphi^*}\, 
(\norm{x}/\gamma)\big)\big]\cap\R$ such that
\begin{equation}
\label{eq:plc_radial_geq0_mu}
\mu = \eta + \gamma \varphi^* 
\bigg(\prox_{\frac{\mu}{\gamma}\varphi^*}\bigg(\frac{\|x\|}{\gamma}\bigg)\bigg).
\end{equation} 
Furthermore
    \begin{equation}
\label{eq:plc_radial_geq0_prox}
        \prox_{\gamma \widetilde{f}}(x,\eta) = 
        \begin{cases}
            \left(\prox_{\frac{\gamma}{\mu}\varphi}\big(\frac{\|x\|}{\mu}\big)
            \frac{\mu x}{\|x\|} ,\mu\right),&\text{if}\:\:x\neq 0;\\[2mm]
            (0,\eta+\gamma\varphi^*(0)), &\text{if}\:\:x= 0.
        \end{cases}
    \end{equation}
    \end{enumerate}    
\end{proposition}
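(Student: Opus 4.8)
The plan is to reduce the statement to Proposition~\ref{l:prox_persp} applied to $f=\varphi\circ\norm{\cdot}$, using the radial structure to rewrite every object appearing there purely in terms of $\varphi$ and $\norm{x}$.

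First I would collect the elementary facts about radial functions that drive the reduction. Since $\varphi\in\Gamma_0(\R)$ is even and convex, it is nondecreasing on $\RP$ and satisfies $\varphi(0)=\min\varphi<+\infty$; hence $f=\varphi\circ\norm{\cdot}$ is proper, convex and lower semicontinuous, so $f\in\Gamma_0(\HH)$ and Proposition~\ref{l:prox_persp} applies. A standard computation using the Cauchy--Schwarz inequality and the evenness of $\varphi$ gives $f^*=\varphi^*\circ\norm{\cdot}$, with $\varphi^*\in\Gamma_0(\R)$ even; consequently $\dom f^*=\menge{u\in\HH}{\norm{u}\in\dom\varphi^*}$, and, setting $\rho:=\sup\dom\varphi^*$, the set $\cdom f^*$ is the closed ball $\menge{u\in\HH}{\norm{u}\le\rho}$, so that $P_{\cdom f^*}(u)=\big(P_{\cdom\varphi^*}(\norm{u})/\norm{u}\big)u$ for $u\neq0$, $P_{\cdom f^*}(0)=0$, and $f^*(P_{\cdom f^*}(u))=\varphi^*(P_{\cdom\varphi^*}(\norm{u}))$. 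Finally, for every $\lambda\in\RPP$ and every even $\psi\in\Gamma_0(\R)$ one has $\prox_{\lambda\psi}(0)=0$ and $\prox_{\lambda\psi}(t)\ge0$ for $t\ge0$ (monotonicity of the resolvent of $\partial\psi$), whence $\prox_{\lambda(\psi\circ\norm{\cdot})}(z)=\prox_{\lambda\psi}(\norm{z})\,z/\norm{z}$ for $z\neq0$ and $\prox_{\lambda(\psi\circ\norm{\cdot})}(0)=0$; in particular $(\psi\circ\norm{\cdot})(\prox_{\lambda(\psi\circ\norm{\cdot})}(z))=\psi(\prox_{\lambda\psi}(\norm{z}))$.

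With these identities in hand, both parts follow by substitution. For \ref{pro:plc_radial_i}: the hypothesis is exactly that of Proposition~\ref{l:prox_persp}\ref{l:prox_persp_i}, so $\prox_{\gamma\widetilde{f}}(x,\eta)=(x-\gamma P_{\cdom f^*}(x/\gamma),0)$; evaluating $P_{\cdom f^*}(x/\gamma)$ by the radial formula above when $x\neq0$ (using $\norm{x/\gamma}=\norm{x}/\gamma$), and directly when $x=0$, yields \eqref{eq:plc_radial_leq0}. For \ref{pro:plc_radial_ii}: the hypothesis is that of Proposition~\ref{l:prox_persp}\ref{l:prox_persp_ii}, which provides a unique $\mu$ in the stated interval with $\mu=\eta+\gamma f^*(\prox_{(\mu/\gamma)f^*}(x/\gamma))$; substituting $\prox_{(\mu/\gamma)f^*}(x/\gamma)=\prox_{(\mu/\gamma)\varphi^*}(\norm{x}/\gamma)\,x/\norm{x}$ together with $f^*=\varphi^*\circ\norm{\cdot}$ and $\prox_{(\mu/\gamma)\varphi^*}(\norm{x}/\gamma)\ge0$ turns this into \eqref{eq:plc_radial_geq0_mu}, and inserting $\prox_{(\gamma/\mu)f}(x/\mu)=\prox_{(\gamma/\mu)\varphi}(\norm{x}/\mu)\,x/\norm{x}$ into \eqref{e:prox_persp_ii} gives \eqref{eq:plc_radial_geq0_prox}; when $x=0$, $\prox_{(\mu/\gamma)f^*}(0)=0$ forces $\mu=\eta+\gamma\varphi^*(0)$ and $\prox_{\gamma\widetilde{f}}(0,\eta)=(0,\mu)$, the remaining case of \eqref{eq:plc_radial_geq0_prox}.

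There is no deep difficulty here — the argument is essentially bookkeeping — but two points require genuine care. The first is the conjugation identity $f^*=\varphi^*\circ\norm{\cdot}$: evenness of $\varphi$ is what allows the inner supremum over directions to be taken unrestrictedly, and it is also needed to see that $\varphi^*$ is even, so that $\cdom\varphi^*$ is a symmetric interval and $\cdom f^*$ a ball with the claimed projection. The second is the sign and direction bookkeeping in \ref{pro:plc_radial_ii}: one must know $\prox_{(\mu/\gamma)\varphi^*}(\norm{x}/\gamma)\ge0$ in order to drop an absolute value and write $f^*$ of the radial point as $\varphi^*$ evaluated at that very scalar, and one must check that the directional factor $x/\norm{x}$ is the common direction of $x/\gamma$ and $x/\mu$ so that it passes cleanly through the radial prox formulas for both $f$ and $f^*$. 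The degenerate cases $x=0$ are then settled by inspection, since the proximity operators of the relevant even functions vanish at the origin.
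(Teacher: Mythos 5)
Your argument is correct. Note that the paper itself gives no proof of Proposition~\ref{pro:plc_radial}: it is introduced as a slight modification of \cite[Proposition~3.3]{bricenoarias2023enhanced} and used by citation, exactly as Proposition~\ref{l:prox_persp} is quoted from \cite[Theorem~3.1]{bricenoarias2023enhanced}. What you supply is the explicit reduction that the citation hides, and it is the natural one: since $\varphi$ is even, $f^*=\varphi^*\circ\norm{\cdot}$ with $\varphi^*$ even, so $\cdom f^*$ is a closed ball and $P_{\,\cdom f^*}$ acts radially through $P_{\,\cdom\varphi^*}$; moreover the radial prox identity $\prox_{\lambda(\psi\circ\norm{\cdot})}(z)=\prox_{\lambda\psi}(\norm{z})\,z/\norm{z}$ for even $\psi\in\Gamma_0(\R)$, together with $\prox_{\lambda\psi}(0)=0$ and $\prox_{\lambda\psi}\ge 0$ on $\RP$, converts each case of Proposition~\ref{l:prox_persp} (applied with $\gamma$ and with the scalings $\mu/\gamma$, $\gamma/\mu$) into the corresponding scalar statement, including the degenerate case $x=0$ in part \ref{pro:plc_radial_ii}, where the fixed-point equation collapses to $\mu=\eta+\gamma\varphi^*(0)$ and the prox is $(0,\mu)$. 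The two delicate points you single out are indeed the ones that matter: evenness is what makes the one-dimensional supremum in the conjugation identity unrestricted and makes $\cdom\varphi^*$ a symmetric interval, and the nonnegativity of $\prox_{(\mu/\gamma)\varphi^*}(\norm{x}/\gamma)$ (monotonicity of the scalar prox plus its vanishing at $0$, which uses that $0$ minimizes the even functions $\varphi$ and $\varphi^*$) is what lets you evaluate $f^*$ and $f$ at the radial prox points as $\varphi^*$, respectively $\varphi$, of the scalar prox values. So your proof is sound and self-contained modulo standard facts; it differs from the paper only in that the paper outsources the verification to the companion reference, whereas you derive the radial refinement directly from Proposition~\ref{l:prox_persp}.
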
 

\section{Main results}
\label{s:3}
Now we provide our main result, which provides an explicit 
formula for the projection onto the epigraph of a perspective 
function via its proximity operator. 

\begin{theorem}
\label{t:main_result} 
Let $f \in \Gamma_0(\HH)$ and let 
$(x,\eta,\delta)\in\HH\times\R^2$. Then we have
\begin{equation}
	\label{e:P_epi_persp}
P_{\epi \widetilde{f}}(x,\eta,\delta)=
\begin{cases}
\big(P_{\,\cdom \widetilde{f}}(x,\eta),\delta\big),&\text{if}\:\: 
\widetilde{f}\big(P_{\, \cdom \widetilde{f}} (x,\eta)\big) \le 
\delta;\\[2mm]
 \big(\prox_{\mu\widetilde{f}}(x,\eta),\delta + 
 \mu\big),&\text{if}\:\:\widetilde{f}
\big(P_{\, \cdom \widetilde{f}} (x,\eta)\big) > \delta,
\end{cases} 
\end{equation}
where $\mu\in\,\big]0,-\delta + 
	\widetilde{f}(P_{\,\cdom \widetilde{f}}(x,\eta))\big]\cap\R$ is the 
	unique 
	solution to 
\begin{equation}
	\label{e:eqmain}
	\mu +\delta 
	-\widetilde{f}\left(\prox_{\mu\widetilde{f}}(x,\eta)\right)=0.	
\end{equation}
\end{theorem}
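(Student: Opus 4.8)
### Proof Proposal

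The plan is to reduce the projection onto $\epi\widetilde{f}\subset\HH\times\R^2$ to the projection onto the epigraph of a general $g\in\Gamma_0(\K)$, exploiting that $\widetilde f$ itself is such a function (Lemma~\ref{l:persp_prop}\ref{l:persp_prop_i}) with $\K=\HH\oplus\R$. So first I would establish the auxiliary identity: for $g\in\Gamma_0(\K)$ and $(z,\delta)\in\K\times\R$, writing $\bar z = P_{\cdom g}\,z$, one has
\begin{equation*}
P_{\epi g}(z,\delta)=
\begin{cases}
(\bar z,\delta), & \text{if } g(\bar z)\le\delta;\\
\big(\prox_{\mu g}z,\ \delta+\mu\big), & \text{if } g(\bar z)>\delta,
\end{cases}
\end{equation*}
where in the second case $\mu\in\,]0,\,-\delta+g(\bar z)]\cap\R$ is the unique root of $\mu+\delta-g(\prox_{\mu g}z)=0$. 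The theorem then follows by instantiating $g=\widetilde f$, $z=(x,\eta)$, and recalling $\cdom\widetilde f$ is the relevant closed convex set so that $P_{\cdom\widetilde f}(x,\eta)$ is well defined; note $\epi g$ is closed convex (hence the projection exists and is unique) because $g\in\Gamma_0(\K)$.

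For the auxiliary identity I would argue as follows. The projection $(p,\rho)=P_{\epi g}(z,\delta)$ is characterized by $(z-p,\delta-\rho)\in N_{\epi g}(p,\rho)$, equivalently by the variational inequality $\langle z-p\mid y-p\rangle+(\delta-\rho)(\tau-\rho)\le 0$ for all $(y,\tau)\in\epi g$. \textbf{Case 1:} if $g(\bar z)\le\delta$, I claim $(p,\rho)=(\bar z,\delta)$; plugging in, the inequality becomes $\langle z-\bar z\mid y-\bar z\rangle+0\le 0$, which holds since $\bar z=P_{\cdom g}z$ and $\cdom g$ contains all first coordinates of points of $\epi g$ — and $(\bar z,\delta)\in\epi g$ because $g(\bar z)\le\delta$ (this needs $\bar z\in\dom g$, which is where $\cdom g$ versus $\dom g$ must be handled carefully, using lower semicontinuity and $g(\bar z)\le\delta<\infty$). \textbf{Case 2:} if $g(\bar z)>\delta$, the projection must land on the boundary $\{g=\cdot\}$, i.e. $\rho=g(p)$ with $p\in\dom g$. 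Setting $\mu:=\delta-\rho$... actually $\mu:=\rho-\delta>0$ (I must fix the sign; with $\rho>\delta$ so that the point is "pushed up"), the normal cone condition decomposes: the $\R$-component gives the multiplier, and the $\HH$-component together with the first-order optimality for minimizing $\frac12\|z-y\|^2+\frac12(\delta-g(y))^2\cdot(\text{something})$... Cleaner: the optimality system reads $z-p\in\mu\,\partial g(p)$ and $\rho-\delta=\mu$, i.e. $p=\prox_{\mu g}z$ by \eqref{e:prox_char}, and the scalar equation $\rho=g(p)$ becomes $\delta+\mu=g(\prox_{\mu g}z)$, which is \eqref{e:eqmain} after rearrangement.

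The main obstacle is the \emph{existence and uniqueness of $\mu$} in Case 2, together with the a priori bound $\mu\le -\delta+g(\bar z)$. For this I would introduce $\psi(\mu):=\mu+\delta-g(\prox_{\mu g}z)$ on $\,]0,+\infty[$ and show it has a unique zero in the claimed interval. Continuity of $\mu\mapsto\prox_{\mu g}z$ and of $\mu\mapsto g(\prox_{\mu g}z)$ follows from standard properties of the Moreau envelope (indeed $g(\prox_{\mu g}z)=e_{\mu g}(z)-\tfrac1{2\mu}\|z-\prox_{\mu g}z\|^2$ and $\mu\mapsto e_{\mu g}(z)$ is concave and differentiable with derivative $-\tfrac1{2\mu^2}\|z-\prox_{\mu g}z\|^2$). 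Monotonicity of $\psi$ should come from the fact that $\mu\mapsto g(\prox_{\mu g}z)$ is nonincreasing (as $\mu$ grows the proximal point moves toward minimizers of $g$), making $\psi$ strictly increasing, hence injective. For the boundary values: as $\mu\downarrow 0$, $\prox_{\mu g}z\to P_{\cdom g}z=\bar z$ and $g(\prox_{\mu g}z)\to g(\bar z)$ (here again lsc and the geometry of $\cdom g$ are needed), so $\psi(0^+)=\delta-g(\bar z)<0$ in Case 2; and evaluating at $\mu^\sharp:=-\delta+g(\bar z)$ one checks $\psi(\mu^\sharp)=g(\bar z)-g(\prox_{\mu^\sharp g}z)\ge 0$ by monotonicity. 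By the intermediate value theorem a zero exists in $\,]0,\mu^\sharp]$, and strict monotonicity makes it unique — this last chain of limit/monotonicity arguments is the delicate part and likely mirrors the proof of Proposition~\ref{l:prox_persp}, which I would cite or adapt rather than redo in full.
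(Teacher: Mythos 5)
Your proposal is correct in substance but follows a genuinely different route from the paper. The paper never argues on the primal epigraph directly: it writes $\epi\widetilde{f}=L(\widetilde{C})$ with $L\colon(x,\eta,\delta)\mapsto(x,\eta,-\delta)$ and $\widetilde{C}=\menge{(x,\eta,\delta)}{\delta+\widetilde{f}(x,\eta)\le 0}$, observes via Lemma~\ref{l:persp_prop}\ref{l:persp_prop_iii} (applied to $(\widetilde{f})^*$, using $(\widetilde f)^{**}=\widetilde f$) that $\iota_{\widetilde{C}}$ is the conjugate of the perspective of $(\widetilde f)^*$, and then Moreau's decomposition gives $P_{\epi\widetilde f}=L\circ\big(\Id-\prox_{\widetilde{(\widetilde f)^*}}\big)\circ L$; the two cases of \eqref{e:P_epi_persp}, together with the existence, uniqueness and the bound on $\mu$, then drop out of Proposition~\ref{l:prox_persp} applied to $(\widetilde f)^*$ with $\gamma=1$, so the delicate scalar-equation analysis is inherited from that proposition rather than redone. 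You instead prove a general epigraph-projection formula for an arbitrary $g\in\Gamma_0(\K)$ by a normal-cone/KKT analysis and specialize $g=\widetilde f$; this is a valid and in fact more general argument (it uses nothing about perspectives or cones), but it obliges you to re-establish what the paper gets for free: continuity and monotonicity of $\mu\mapsto g(\prox_{\mu g}z)$, the limit $g(\prox_{\mu g}z)\to g(P_{\cdom g}z)$ as $\mu\downarrow 0$ (this is exactly the Attouch/\cite{bricenoarias2023proximity} material the paper cites in Remark~\ref{r:phi}\ref{r:phi_prop}), and the bound $g(\prox_{\mu g}z)\le g(P_{\cdom g}z)$, which yields $\mu\le-\delta+g(P_{\cdom g}z)$. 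Two points in your Case~2 deserve to be made explicit: you must rule out a projection of the form $(p,\delta)$ with $g(p)\le\delta$, which follows because the normal-cone condition with vanishing last component forces $p=P_{\cdom g}z$ and hence $g(P_{\cdom g}z)\le\delta$, contradicting the case hypothesis; and when $g(P_{\cdom g}z)=+\infty$ your intermediate-value argument for existence of the root needs care, though existence is anyway automatic since $\mu=\rho-\delta$ coming from the (existing, unique) projection already solves the equation, so only uniqueness and the upper bound require the monotonicity argument. With these details filled in, your proof is a sound, more elementary and more general alternative; the paper's duality route is shorter precisely because it recycles Proposition~\ref{l:prox_persp}.
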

\begin{proof}
Set $L : \HH \oplus \R^2 \to \HH \oplus \R^2: 
(x,\eta,\delta) \mapsto (x,\eta,-\delta)$ and denote
\begin{equation}
	\widetilde{C} \coloneqq \menge{(x,\eta,\delta) \in \HH \times 
	\R^2}{\delta + 
	\widetilde{f}(x,\eta) \le 0}.
\end{equation}
Since $L = L^{-1} = L^*$ and $\epi \widetilde{f} = L(\widetilde{C})$, 
it follows from \cite[Proposition~29.2(ii)]{bauschke2011convex} that
\begin{equation}
	\label{e:P_epi}
	P_{\epi \widetilde{f}} = P_{L(\widetilde{C})} = L \circ 
	P_{\widetilde{C}} \circ L.
\end{equation}
Furthermore, it follows from 
Lemma~\ref{l:persp_prop}\ref{l:persp_prop_i} that 
$\widetilde{f} \in 
\Gamma_0(\HH \oplus \R)$, which yields $\big(\widetilde{f}\big)^{*} 
\in 
\Gamma_0(\HH \oplus \R)$ and 
\begin{equation}
\label{e:dobcon}
\Big(\widetilde{f}\Big)^{**}=\widetilde{f}.
\end{equation}
Hence, 
Lemma~\ref{l:persp_prop}\ref{l:persp_prop_iii} and
$\eqref{e:moreau_decomp}$
yield
\begin{equation}
	\label{e:PC}
	P_{\widetilde{C}} =\prox_{\iota_{\widetilde{C}}} = \Id - 
	\prox_{\widetilde{\big(\widetilde{f}\big)^*}}.
\end{equation}
Therefore, \eqref{e:P_epi} and \eqref{e:PC} imply that
\begin{equation}
\label{e:auxmain}
P_{\epi 
\widetilde{f}}(x,\eta,\delta) 
=L\left(\Id - 
\prox_{\widetilde{\big(\widetilde{f}\big)^*}}\right)(x,\eta,-\delta) = (x,\eta,\delta) - L 
	\left(\prox_{\widetilde{\big(\widetilde{f}\big)^*}}(x,\eta,-\delta)\right).
\end{equation}
Hence, in order to compute $P_{\epi 
\widetilde{f}}$ we consider Proposition~\ref{l:prox_persp} with
the function
$\big(\widetilde{f}\big)^*$ and 
$\gamma=1$. We consider two 
cases.
\begin{enumerate}
\item $\underline{\widetilde{f}(P_{\,\cdom 
\widetilde{f}}(x,\eta)) \le \delta}$:
It follows from 
Proposition~\ref{l:prox_persp}\ref{l:prox_persp_i} and 
\eqref{e:dobcon} that
\begin{align}
\prox_{\widetilde{\big(\widetilde{f}\big)^*}}(x,\eta,-\delta)
=\left((x,\eta)-P_{\cdom\widetilde{f}}(x,\eta),0\right),
\end{align}
and \eqref{e:auxmain} reduces to
\begin{equation}
P_{\epi \widetilde{f}}(x,\eta,\delta)= (x,\eta,\delta) - 
		\left((x,\eta) - P_{\, \cdom \widetilde{f}}(x,\eta),0\right) = 
		\left(P_{\, \cdom \widetilde{f}}(x,\eta),\delta\right).
\end{equation}

\item $\underline{\widetilde{f}(P_{\,\cdom 
\widetilde{f}}(x,\eta)) > \delta}$:
	Proposition~\ref{l:prox_persp}\ref{l:prox_persp_ii} and 
\eqref{e:dobcon} imply that there 
	exists a unique $\mu \in ~]0,-\delta + \widetilde{f}(P_{\,\cdom 
		\widetilde{f}}(x,\eta))]$ such that $\mu =-\delta+ 
		\widetilde{f}(\prox_{\mu \widetilde{f}\,}(x,\eta))$ and 
\begin{align}
\prox_{\widetilde{\big(\widetilde{f}\big)^*}}(x,\eta,-\delta)
=\left((x,\eta)-\prox_{\mu 
\widetilde{f}}(x,\eta),\mu\right).
\end{align}
Therefore, \eqref{e:auxmain} reduces to
	\begin{align}
		\label{e:MR_aux_ge1}
		P_{\epi \widetilde{f}}(x,\eta,\delta) &= (x,\eta,\delta) - 
		L\left((x,\eta) - \prox_{\mu \widetilde{f}}(x,\eta), 
		\mu\right)\nonumber\\
&= (x,\eta,\delta) - 
		\left((x,\eta) - \prox_{\mu \widetilde{f}}(x,\eta), 
		-\mu\right)\nonumber\\
& = 
\left(\prox_{\mu \widetilde{f}}(x,\eta), \delta +\mu\right).
	\end{align}
\end{enumerate}
The proof is complete.
\end{proof}

\begin{remark}
	\label{r:phi}
\begin{enumerate}[label=(\roman*)]
\item Note that the first condition of \eqref{e:P_epi_persp} in 
Theorem~\ref{t:main_result} asserts that, if $\big(P_{\,\cdom 
\widetilde{f}}(x,\eta),\delta\big)\in\epi \widetilde{f}$ then $P_{\epi 
\widetilde{f}}(x,\eta,\delta)=\big(P_{\,\cdom 
\widetilde{f}}(x,\eta),\delta\big)$, which is an explicit expression 
for 
$P_{\epi\widetilde{f}}(x,\eta,\delta)$ for points $(x,\eta)$ which are 
not necessarily in the domain of $\widetilde{f}$.
\item
\label{r:phi_prop} 
In the context of Theorem~\ref{t:main_result}, suppose that 
$\widetilde{f}(P_{\,\cdom 
\widetilde{f}}(x,\eta))>\delta$ and set $\phi\colon 
\mu\mapsto \mu +\delta 
-\widetilde{f}(\prox_{\mu\widetilde{f}}(x,\eta))$. We deduce from 
\cite[Lemma~3.27]{attouch1984variational} and 
\cite[Lemma~3.3]{bricenoarias2023proximity} that $\phi$ is 
continuous, strictly increasing in $\RPP$, 
$\lim_{\mu\to+\infty}\phi(\mu)=+\infty$, and $\lim_{\mu\downarrow 
0}\phi(\mu)=\delta-\widetilde{f}(P_{\,\cdom 
\widetilde{f}}(x,\eta))<0$. Therefore, the unique 
solution to $\phi(\mu)=0$ guaranteed by 
Theorem~\ref{t:main_result} can be obtained by state-of-the-art 
root finding algorithms \cite{press2007numerical}.
\end{enumerate}
\end{remark}
The following result gives a closed form expression for 
$P_{\epi 
\widetilde{f}}(x,\eta,\delta)$ in
Theorem~\ref{t:main_result}.

\begin{proposition}
\label{p:f_prox}
Let $f\in\Gamma_0(\HH)$, let $\mu\in\RPP$, and let 
$(x,\eta)\in\HH\times\R$. Then
\begin{equation}
\prox_{\mu \widetilde{f}}(x,\eta)
=
\begin{cases}
\big(\prox_{\mu(\rec f)}x,0\big),&\text{if}\:\:\eta+\mu 
f^*\Big(P_{\,\cdom f^*}\big(\frac{x}{\mu}\big)\Big)\le 0;\\[3mm]
\Big(\nu\prox_{\frac{\mu}{\nu} 
f}\big(\frac{x}{\nu}\big),\nu\Big),&\text{if}\:\:\eta+\mu 
f^*\Big(P_{\,\cdom f^*}\big(\frac{x}{\mu}\big)\Big)> 0,
\end{cases}
\end{equation}
and 
\begin{equation}
\label{e:fproxf}
\widetilde{f}\big(\prox_{\mu 
\widetilde{f}}(x,\eta)\big)
=
\begin{cases}
(\rec f)\big(\prox_{\mu(\rec f)}x\big),&\text{if}\:\:\eta+\mu 
f^*\Big(P_{\,\cdom f^*}\big(\frac{x}{\mu}\big)\Big)\le 0;\\[3mm]
\nu f\Big(\prox_{\frac{\mu}{\nu} 
f}(\frac{x}{\nu})\Big),&\text{if}\:\:\eta+\mu 
f^*\Big(P_{\,\cdom f^*}\big(\frac{x}{\mu}\big)\Big)> 0,
\end{cases}
\end{equation}
where $\nu\in\big]0,\eta + \mu f^*(P_{\cdom \, f^*} 
(x/\mu))\big]\cap\R$ is the unique solution to 
\begin{equation}
\label{e:nu}
\nu=\eta+\mu 
f^*\left(\prox_{\frac{\nu}{\mu}f^*}\left(\frac{x}{\mu}\right)\right).
\end{equation}
\end{proposition}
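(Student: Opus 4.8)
The plan is to apply Proposition~\ref{l:prox_persp} directly with $\gamma=\mu$, which already delivers the two-case structure keyed on the sign of $\eta+\mu f^*\big(P_{\cdom f^*}(x/\mu)\big)$. What remains is to simplify each of the two output expressions of Proposition~\ref{l:prox_persp} into the form stated here, and then evaluate $\widetilde f$ at the resulting point. First I would treat the case $\eta+\mu f^*\big(P_{\cdom f^*}(x/\mu)\big)\le 0$: Proposition~\ref{l:prox_persp}\ref{l:prox_persp_i} gives $\prox_{\mu\widetilde f}(x,\eta)=\big(x-\mu P_{\cdom f^*}(x/\mu),0\big)$, and the goal is to recognize the first coordinate as $\prox_{\mu(\rec f)}x$. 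This follows from the Moreau decomposition \eqref{e:moreau_decomp} together with the identity $\rec f=\sigma_{\cdom f^*}$ from \eqref{e:recession_char}: since $(\rec f)^*=\iota_{\cdom f^*}$, one has $\prox_{\mu(\rec f)}=\Id-\mu\prox_{\iota_{\cdom f^*}/\mu}\circ(\Id/\mu)=\Id-\mu P_{\cdom f^*}\circ(\Id/\mu)$, which is exactly $x\mapsto x-\mu P_{\cdom f^*}(x/\mu)$. Evaluating $\widetilde f$ at $\big(\prox_{\mu(\rec f)}x,0\big)$ then gives $(\rec f)\big(\prox_{\mu(\rec f)}x\big)$ by the second branch of Definition~\ref{def:perspective}, which is the claimed value in \eqref{e:fproxf}.

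Next I would treat the case $\eta+\mu f^*\big(P_{\cdom f^*}(x/\mu)\big)>0$. Proposition~\ref{l:prox_persp}\ref{l:prox_persp_ii}, applied with $\gamma=\mu$, produces a unique $\nu\in\big]0,\eta+\mu f^*(P_{\cdom f^*}(x/\mu))\big]\cap\R$ satisfying $\nu=\eta+\mu f^*\big(\prox_{(\nu/\mu)f^*}(x/\mu)\big)$ — this is exactly equation \eqref{e:nu} — and $\prox_{\mu\widetilde f}(x,\eta)=\big(\nu\prox_{(\mu/\nu)f}(x/\nu),\nu\big)$, matching the second branch of the first display. Since $\nu>0$, evaluating $\widetilde f$ on this point uses the first branch of Definition~\ref{def:perspective}: $\widetilde f\big(\nu\prox_{(\mu/\nu)f}(x/\nu),\nu\big)=\nu f\big(\prox_{(\mu/\nu)f}(x/\nu)\big)$, which is the claimed value in \eqref{e:fproxf}.

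I do not anticipate a genuine obstacle here — this proposition is essentially a restatement of Proposition~\ref{l:prox_persp} in self-contained notation plus a bookkeeping computation of $\widetilde f$ at the output. The one point that deserves a careful sentence rather than a one-line citation is the identification $x\mapsto x-\mu P_{\cdom f^*}(x/\mu)$ with $\prox_{\mu(\rec f)}$: it requires knowing that $\rec f\in\Gamma_0(\HH)$ so that its proximity operator is well defined, and that $(\rec f)^*=\iota_{\cdom f^*}$, both of which follow from \eqref{e:recession_char} (recall $\sigma_C=(\iota_C)^*$ and $\iota_C^{**}=\iota_C$ for nonempty closed convex $C$, with $C=\cdom f^*$). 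A minor caveat worth flagging is the degenerate situation $x=0$ in the $\eta+\mu f^*(P_{\cdom f^*}(0))>0$ case, where $\prox_{(\mu/\nu)f}(x/\nu)$ is applied at $0$; this is covered uniformly by the formula and needs no separate treatment, but it is the only place where one might momentarily worry about division by $\|x\|$ — and indeed there is no such division here, unlike in the radial refinement Proposition~\ref{pro:plc_radial}. Once these two identifications are in place, the proof is complete.
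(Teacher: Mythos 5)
Your proposal is correct and follows essentially the same route as the paper: apply Proposition~\ref{l:prox_persp} with $\gamma=\mu$, identify $x-\mu P_{\,\cdom f^*}(x/\mu)$ with $\prox_{\mu(\rec f)}x$ via the Moreau decomposition \eqref{e:moreau_decomp} and the identity $\rec f=\sigma_{\,\cdom f^*}$ from \eqref{e:recession_char}, and then evaluate $\widetilde{f}$ at the resulting point through Definition~\ref{def:perspective}. Your slightly more explicit justification that $(\rec f)^*=\iota_{\,\cdom f^*}$, so that $\prox_{(\rec f)^*/\mu}=P_{\,\cdom f^*}$, is exactly the step the paper compresses into its citations, so there is nothing to add.
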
 
\begin{proof}
In view of Proposition~\ref{l:prox_persp}, we explore two cases.
If $\eta + \mu f^*(P_{\cdom \, f^*} (x/\mu)) \le 0$, it follows 
from Proposition~\ref{l:prox_persp}\ref{l:prox_persp_i},
\eqref{e:moreau_decomp}, and \eqref{e:recession_char} that
\begin{equation}
\prox_{\mu \widetilde{f}}(x,\eta)
=\left(\prox_{\mu \sigma_{\cdom f^*}} x,0\right)=\left(\prox_{\mu 
(\rec f)} x,0\right),
\end{equation}
and \eqref{e:persp_def} yields
	\begin{align}
		\widetilde{f}\left(\prox_{\mu \widetilde{f}}(x,\eta)\right) 
= (\rec f)\left(\prox_{\mu (\rec f)} x\right).
	\end{align}

On the other hand, if $\eta + \mu f^*(P_{\cdom \, f^*} 
(x/\mu)) > 0$, Proposition~\ref{l:prox_persp}\ref{l:prox_persp_ii}
asserts that
there exists a unique $\nu \in\big]0,\eta + \mu f^*(P_{\cdom \, 
f^*} 
(x/\mu))\big]\cap\R$ such that 
$\nu=\eta+\mu f^*(\prox_{\nu f^* /\mu}(x/\mu))$ 
and
\begin{align}
\prox_{\mu \widetilde{f}}(x,\eta)
&=\left(\nu \prox_{\frac{\mu}{\nu} f} 
\left(\frac{x}{\nu}\right),\nu\right).
\end{align}
Hence, it follows from \eqref{e:persp_def} that
	\begin{align}
		\widetilde{f}\left(\prox_{\mu \widetilde{f}}(x,\eta)\right) &= 	
		\widetilde{f}\left(\prox_{\frac{\mu}{\nu} f} 
\left(\frac{x}{\nu}\right),\nu\right)
	\end{align}
and the result follows.
\end{proof}
The next result specifies Proposition~\ref{p:f_prox} for the particular case of radial functions.
\begin{proposition}
	\label{p:fproxf_radial}
Let $\varphi \in \Gamma_0(\R)$ be an even supercoercive 
function such that
$\dom\varphi=\R$, set $f = \varphi \circ 
	\norm{\cdot}$, and let $(x,\eta,\delta) \in \HH 
	\times 
	\R\times\R$. Then we have
\begin{equation}
P_{\epi \widetilde{f}}(x,\eta,\delta)=
\begin{cases}
(x,\max\{\eta,0\},\delta),&\text{if}\:\:
\widetilde{\varphi}(\|x\|,\max\{\eta,0\})\le \delta;\\[1mm]
(0,0,0),&\text{if}\:\:
\widetilde{\varphi}(\|x\|,\max\{\eta,0\})> \delta,\:\:
\delta<0,\:\:\text{and}\:\:\eta-\delta\varphi^*(\frac{\|x\|}{-\delta})\le 
0;\\[1mm]
\left(0,\frac{\eta-\delta\varphi^*(0)}{1+(\varphi^*(0))^2},
\frac{\varphi^*(0)(\delta\varphi^*(0)-\eta)}{1+(\varphi^*(0))^2}\right),
&\text{if}\:\:\widetilde{\varphi}(\|x\|,\max\{\eta,0\})> 
\delta,\:\:\:\:\eta-\delta\varphi^*(0)>0,
\:\:\text{and}\:\:x= 0;\\[2mm]
\left(\prox_{\frac{\mu}{\nu}\varphi}
\left(\frac{\|x\|}{\nu}\right)\frac{\nu 
x}{\|x\|},\nu,\delta+\mu\right),&\text{otherwise},
\end{cases}
\end{equation}
	where $\nu>0$ and $\mu>0$ are the unique solutions to the 
	 $2\times 2$ system of nonlinear equations
	\begin{eqnarray}
		\label{e:mu_radial}
\mu+\delta&=&\nu\varphi\left(\prox_{\frac{\mu}{\nu}\varphi} 
		\left(\frac{\norm{x}}{\nu}\right)\right)\\
		\nu-\eta&=&\mu \varphi^*\left(\prox_{\frac{\nu}{\mu}\varphi^*} 
		\left(\frac{\norm{x}}{\mu}\right)\right).		\label{e:nu_radial}
	\end{eqnarray}
\end{proposition}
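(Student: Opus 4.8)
The plan is to specialize Theorem~\ref{t:main_result} and Proposition~\ref{p:f_prox} to the radial function $f=\varphi\circ\norm{\cdot}$ and to carry out the resulting elementary algebra. First I would record the consequences of the hypotheses on $\varphi$. Since $\dom\varphi=\R$, the function $\varphi$ is finite and continuous, hence subdifferentiable everywhere, and $0\in\partial\varphi(0)$ because $\partial\varphi(0)$ is nonempty, convex, and (by evenness of $\varphi$) symmetric; consequently $0$ minimizes $\varphi$ and $\varphi^{*}(0)=-\varphi(0)$. The same reasoning applies to the even function $\varphi^{*}$ (whose domain is $\R$ since $\varphi$ is supercoercive), so both $\varphi$ and $\varphi^{*}$ are nondecreasing on $\RP$, $\prox_{\gamma\varphi}(0)=\prox_{\gamma\varphi^{*}}(0)=0$, and $\prox_{\gamma\varphi}(t),\prox_{\gamma\varphi^{*}}(t)\ge0$ for $t\ge0$, together with the usual radial identities $\prox_{\gamma(\varphi\circ\norm{\cdot})}(y)=\prox_{\gamma\varphi}(\norm y)\,y/\norm y$ for $y\neq0$, $\prox_{\gamma(\varphi\circ\norm{\cdot})}(0)=0$, and $f^{*}=\varphi^{*}\circ\norm{\cdot}$. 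Moreover $f$ is supercoercive, so $\dom f^{*}=\HH$, whence $P_{\cdom f^{*}}=\Id$, $P_{\cdom\varphi^{*}}=\Id$, and $\rec f=\sigma_{\HH}=\iota_{\{0\}}$ by \eqref{e:recession_char}; in particular $\dom\widetilde f=(\HH\times\RPP)\cup\{(0,0)\}$, so $\cdom\widetilde f=\HH\times\RP$, $P_{\cdom\widetilde f}(x,\eta)=(x,\max\{\eta,0\})$, and, by \eqref{e:persp_def}, $\widetilde f\big(P_{\cdom\widetilde f}(x,\eta)\big)=\widetilde\varphi(\norm x,\max\{\eta,0\})$. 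With these identifications the first alternative in \eqref{e:P_epi_persp} is precisely the first case of the proposition.

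For the second alternative, i.e.\ when $\widetilde\varphi(\norm x,\max\{\eta,0\})>\delta$, Theorem~\ref{t:main_result} produces the unique $\mu>0$ solving \eqref{e:eqmain}, and Proposition~\ref{p:f_prox} applied with parameter $\mu$, combined with the radial identities above, expresses $\prox_{\mu\widetilde f}(x,\eta)$ and $\widetilde f\big(\prox_{\mu\widetilde f}(x,\eta)\big)$ according to the sign of $\eta+\mu\varphi^{*}(\norm x/\mu)$. If $\eta+\mu\varphi^{*}(\norm x/\mu)\le0$, then $\prox_{\mu\widetilde f}(x,\eta)=(0,0)$ and $\widetilde f\big(\prox_{\mu\widetilde f}(x,\eta)\big)=0$, so \eqref{e:eqmain} forces $\mu=-\delta$; hence $\delta<0$, this inequality read at $\mu=-\delta$ becomes $\eta-\delta\varphi^{*}(\norm x/(-\delta))\le0$, and $P_{\epi\widetilde f}(x,\eta,\delta)=(0,0,0)$, which is the second case. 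Conversely, under $\delta<0$ and $\eta-\delta\varphi^{*}(\norm x/(-\delta))\le0$ the number $\tilde\mu:=-\delta>0$ lies in this branch and solves \eqref{e:eqmain}, hence equals $\mu$ by the strict monotonicity of $\phi$ from Remark~\ref{r:phi}\ref{r:phi_prop}. If instead $\eta+\mu\varphi^{*}(\norm x/\mu)>0$, let $\nu\in\,]0,\eta+\mu\varphi^{*}(\norm x/\mu)]$ be the unique number supplied by Proposition~\ref{p:f_prox}. When $x\neq0$ one gets $\prox_{\mu\widetilde f}(x,\eta)=\big(\prox_{\frac{\mu}{\nu}\varphi}(\norm x/\nu)\,\nu x/\norm x,\nu\big)$ and $\widetilde f\big(\prox_{\mu\widetilde f}(x,\eta)\big)=\nu\varphi\big(\prox_{\frac{\mu}{\nu}\varphi}(\norm x/\nu)\big)$, so \eqref{e:eqmain} and \eqref{e:nu} turn into the system \eqref{e:mu_radial}--\eqref{e:nu_radial} and $P_{\epi\widetilde f}(x,\eta,\delta)=\big(\prox_{\frac{\mu}{\nu}\varphi}(\norm x/\nu)\,\nu x/\norm x,\nu,\delta+\mu\big)$, the fourth case. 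When $x=0$ the same branch gives $\nu=\eta+\mu\varphi^{*}(0)$, $\prox_{\mu\widetilde f}(0,\eta)=(0,\nu)$, and $\widetilde f\big(\prox_{\mu\widetilde f}(0,\eta)\big)=\nu\varphi(0)$, so, substituting $\varphi(0)=-\varphi^{*}(0)$, the pair $(\mu,\nu)$ solves the non-degenerate linear system $\nu-\mu\varphi^{*}(0)=\eta$, $\mu+\nu\varphi^{*}(0)=-\delta$, whose solution is $\mu=(-\delta-\eta\varphi^{*}(0))/(1+(\varphi^{*}(0))^{2})$ and $\nu=(\eta-\delta\varphi^{*}(0))/(1+(\varphi^{*}(0))^{2})$; since being in this branch means $\nu>0$, i.e.\ $\eta-\delta\varphi^{*}(0)>0$, and since $\delta+\mu=\varphi^{*}(0)(\delta\varphi^{*}(0)-\eta)/(1+(\varphi^{*}(0))^{2})$, this is exactly the third case.

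It remains to check that the four displayed conditions partition the data correctly. The split according to $\widetilde\varphi(\norm x,\max\{\eta,0\})\le\delta$ versus $>\delta$ is complementary by definition; within the latter, Cases~2 and~3 are disjoint (for $x=0$ they would demand $\eta-\delta\varphi^{*}(0)\le0$ and $>0$ simultaneously), and the ``otherwise'' case forces $x\neq0$, so that $\nu x/\norm x$ is well defined there: indeed, if $x=0$ and neither Case~2 nor Case~3 holds, then $\delta\ge0$ and $\eta-\delta\varphi^{*}(0)\le0$, while $\widetilde\varphi(0,\max\{\eta,0\})=\max\{\eta,0\}\varphi(0)>\delta\ge0$ forces $\eta>0$ and $\varphi(0)>0$, hence $\varphi^{*}(0)<0$ and $\eta-\delta\varphi^{*}(0)\ge\eta>0$, a contradiction. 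Finally, in the ``otherwise'' case the unique $\mu$ cannot fall in the branch $\eta+\mu\varphi^{*}(\norm x/\mu)\le0$, since as seen above that branch forces $\mu=-\delta$ with $\delta<0$ and $\eta-\delta\varphi^{*}(\norm x/(-\delta))\le0$, which is Case~2. The main obstacle is precisely this bookkeeping: the branching quantity $\eta+\mu\varphi^{*}(\norm x/\mu)$ involves the unknown $\mu$, and one has to exploit that \eqref{e:eqmain} pins $\mu$ to $-\delta$ on that flat branch in order to rewrite the branch condition as one involving only the data, using the strict monotonicity of $\phi$ for uniqueness; uniqueness of the pair $(\mu,\nu)$ in the fourth case is then inherited from the uniqueness statements of Theorem~\ref{t:main_result} and Proposition~\ref{p:f_prox}, and the identity $\varphi^{*}(0)=-\varphi(0)$ is what makes the $x=0$ subcase explicitly solvable.
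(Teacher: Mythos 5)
Your proposal is correct and follows essentially the same route as the paper: specialize Theorem~\ref{t:main_result} with $P_{\,\cdom\widetilde{f}}(x,\eta)=(x,\max\{\eta,0\})$, then translate the $\mu$-dependent branch conditions of Proposition~\ref{p:f_prox}/Proposition~\ref{pro:plc_radial} into conditions on the data, pinning $\mu=-\delta$ on the flat branch and solving the $2\times2$ linear system with $\varphi^*(0)=-\varphi(0)$ when $x=0$. The only differences are cosmetic (you argue the $x=0$ converse by exclusion instead of exhibiting $\widehat{\mu}>0$ explicitly, and you make explicit that the ``otherwise'' case forces $x\neq0$), so no further comparison is needed.
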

\begin{proof}
Since $\dom\varphi=\R$, it follows from 
Definition~\ref{e:persp_def} that $\cdom 
\widetilde{f}=\HH\times\RP$, which yields 
$P_{\cdom 
\widetilde{f}}\colon (x,\eta)\mapsto (x,\max\{\eta,0\})$ and
\begin{equation}
\label{e:ftradial}
\widetilde{f}\big(P_{\cdom 
\widetilde{f}}(x,\eta)\big)
=\widetilde{\varphi}\big(\|x\|,\max\{\eta,0\}\big).
\end{equation}
We split the proof in two cases:

\underline{$\widetilde{\varphi}(\|x\|,\max\{\eta,0\})\le\delta$}:
We deduce from Theorem~\ref{t:main_result} that
$P_{\epi \widetilde{f}}(x,\eta,\delta)=
(x,\max\{\eta,0\},\delta)$.

\underline{$\widetilde{\varphi}(\|x\|,\max\{\eta,0\})>\delta$}:
Theorem~\ref{t:main_result} implies that 
$P_{\epi 
\widetilde{f}}(x,\eta,\delta)
=(\prox_{\mu\widetilde{f}}(x,\eta),\delta+\mu)$, where 
$\mu>0$ is the unique solution to 
\begin{equation}
\label{e:eqmurad}
\mu +\delta 
	-\widetilde{f}\left(\prox_{\mu\widetilde{f}}(x,\eta)\right)=0.
\end{equation}

Noting that the supercoercivity of $\varphi$ implies 
$\dom\varphi^*=\R$, it follows from Proposition~\ref{pro:plc_radial} 
that 
\begin{equation}
\label{eq:plc_radial_geq0_prox2}
P_{\epi 
\widetilde{f}}(x,\eta,\delta)= 
        \begin{cases}
(0,0,\delta+\mu),&\text{if}\:\:\eta+\mu\varphi^*(\|x\|/\mu)\le 0;\\[2mm]
\big(0,\eta+\mu\varphi^*(0),\delta+\mu\big), 
&\text{if}\:\:\eta+\mu\varphi^*(0)>0
\:\:\text{and}\:\:x= 0;\\[2mm]
\left(\prox_{\frac{\mu}{\nu}\varphi} \big(\frac{\|x\|}{\nu}\big)
\frac{\nu 
x}{\|x\|},\nu,\delta+\mu\right),&\text{if}\:\:\eta+\mu\varphi^*(\|x\|/\mu)>0
\:\:\text{and}\:\:x\neq 0,
        \end{cases}
    \end{equation}
where $\nu>0$ is the unique solution to 
\begin{equation}
\label{eq:plc_radial_geq0_mur}
\nu = \eta + \mu \varphi^* 
\left(\prox_{\frac{\nu}{\mu}\varphi^*}\left(\frac{\|x\|}{\mu}\right)\right).
\end{equation} 
We now divide the remainder of the proof into three parts, 
following \eqref{eq:plc_radial_geq0_prox2}:
\begin{enumerate}
\item $\underline{\eta+\mu\varphi^*(\|x\|/\mu)\le 
0\:\Leftrightarrow \:
[\eta-\delta\varphi^*(\|x\|/(-\delta))\le 0\:\: \text{and}\:\: \delta<0}]$:
Indeed, if $\eta+\mu\varphi^*(\|x\|/\mu)\le 0$, 
$\widetilde{f}(\prox_{\mu 
\widetilde{f}}(x,\eta))=\rec f(0)=0$, and we obtain from 
\eqref{e:eqmurad} that $\mu=-\delta>0$, and we get
$\eta-\delta\varphi^*(\|x\|/(-\delta))\le 0$.
Conversely, if $\eta-\delta\varphi^*(\|x\|/(-\delta))\le 0$ and 
$\delta<0$, it follows from Proposition~\ref{pro:plc_radial} that 
$ \prox_{-\delta\widetilde{f}}(x,\eta)=(0,0)$ and, therefore, 
Definition~\ref{def:perspective} yields
$\widetilde{f}(\prox_{-\delta\widetilde{f}}(x,\eta))=0$.
We conclude from \eqref{e:eqmurad} that $\mu=-\delta$ and the 
result follows.

\item $\underline{[x=0\:\:\text{and}\:\:\eta+\mu\varphi^*(0)>0]\:\:
\Leftrightarrow\:\: [x=0\:\:\text{and}\:\:\eta-\delta\varphi^*(0)>0]}:$
Since $\varphi$ is even,
\begin{equation}
\label{e:parity}
\varphi^*(0)=-\inf_{x\in\R}\varphi(x)=-\varphi(0),
\end{equation}  
which implies $1+(\varphi^*(0))^2\ge 1>0$. Now, if $x=0$ and 
$\eta+\mu\varphi^*(0)>0$,
we have from \eqref{eq:plc_radial_geq0_prox2} and 
Definition~\ref{def:perspective} that 
$\widetilde{f}(\prox_{\mu 
\widetilde{f}}(x,\eta))=(\eta+\mu\varphi^*(0))\varphi(0)$. Hence,
\eqref{e:eqmurad} reduces to 
\begin{equation}
\mu=\frac{-\eta\varphi^*(0)-\delta}{1+(\varphi^*(0))^2},
\end{equation}
which yields
\begin{equation}
\label{e:rightaux}
0<\eta+\mu\varphi^*(0)=
\frac{\eta-\delta\varphi^*(0)}{1+(\varphi^*(0))^2}.
\end{equation}
Conversely, if $x=0$ and $\eta-\delta\varphi^*(0)>0$, 
we have that 
\begin{equation}
\label{e:auxmut}
0<\frac{\eta-\delta\varphi^*(0)}{1+(\varphi^*(0))^2}
=\eta+\left(\frac{-\eta\varphi^*(0)-\delta}{1+(\varphi^*(0))^2}\right)
\varphi^*(0).
\end{equation}
Now set
$\widehat{\mu}=\frac{-\eta\varphi^*(0)-\delta}{1+(\varphi^*(0))^2}$ 
and let us prove that $\widehat{\mu}>0$. Indeed,
condition $\widetilde{\varphi}(\|x\|,\max\{\eta,0\})>\delta$ yields two 
cases: either $\eta>0$ and $\eta\varphi(0)>\delta$ or $\eta\le 0$ 
and $\delta<0$. In the first case, it is direct from \eqref{e:parity} 
that $-\eta\varphi^*(0)-\delta>0$ and, therefore, $\widehat{\mu}>0$.
In the second case, it follows from 
$\eta-\delta\varphi^*(0)>0$ that $\varphi^*(0)>0$, which yields 
$-\eta\varphi^*(0)-\delta>0$.

We deduce from \eqref{e:auxmut} that  
$\eta+\widehat{\mu}\varphi^*(0)>0$ and 
Proposition~\ref{pro:plc_radial} implies that 
$\prox_{\widehat{\mu}\widetilde{f}}(x,\eta)
=(0,\eta+\widehat{\mu}\varphi^*(0))$. Hence, 
Definition~\ref{def:perspective} implies 
$$\widetilde{f}(\prox_{\widehat{\mu}\widetilde{f}}(x,\eta))
=(\eta+\widehat{\mu}\varphi^*(0))\varphi(0)=
\frac{\varphi^*(0)(\delta\varphi^*(0)-\eta)}{1+(\varphi^*(0))^2}
=\delta+\widehat{\mu},$$
\eqref{e:eqmurad} yields $\mu=\widehat{\mu}$, and the result 
follows.

\item $\underline{\eta+\mu\varphi^*(\|x\|/\mu)>0
\:\:\text{and}\:\:x\neq 0}$: In this case, it follows from 
\eqref{eq:plc_radial_geq0_prox2} and
Definition~\ref{def:perspective}
that 
\begin{equation}
\widetilde{f}(\prox_{\mu 
\widetilde{f}}(x,\eta))=\nu\varphi\Big(\prox_{\frac{\mu}{\nu} 
            \varphi} \Big(\frac{\|x\|}{\nu}\Big)\Big),
\end{equation}
where $\nu>0$ is the unique solution to 
\eqref{eq:plc_radial_geq0_mur}, and we deduce from 
\eqref{e:eqmurad} that 
$\mu$ solves \eqref{e:nu_radial}.
\end{enumerate}
The proof is complete.
\end{proof}

In order to efficiently solve the nonlinear scalar equation in 
\eqref{e:eqmain} in the case when $\widetilde{f}
\big(P_{\, \cdom \widetilde{f}} (x,\eta)\big) > \delta$, define
\begin{equation}
\phi\colon\left]0,-\delta+ 
	\widetilde{f}(P_{\,\cdom \widetilde{f}}(x,\eta))\right]\to\RX\colon 
	\mu\mapsto 	\mu +\delta 
	-\widetilde{f}\left(\prox_{\mu\widetilde{f}}(x,\eta)\right).
\end{equation}
Note that, in view of Proposition~\ref{p:f_prox}, 
\begin{equation}
\phi\colon 
	\mu\mapsto \begin{cases}
\mu +\delta 
	-(\rec f)\big(\prox_{\mu(\rec f)}x\big),
&\text{if}\:\: \eta+{\mu}
f^*\Big(P_{\,\cdom f^*}\big(\frac{x}{{\mu}}\big)\Big)\le 0;\\
\mu +\delta 
	-\nu f\Big(\prox_{\frac{\mu}{\nu} 
f}\big(\frac{x}{\nu}\big)\Big),
&\text{if}\:\: \eta+{\mu}
f^*\Big(P_{\,\cdom f^*}\big(\frac{x}{{\mu}}\big)\Big)> 0,
\end{cases}
\end{equation}
where
$\nu\in\big]0,\eta + \mu f^*(P_{\cdom \, f^*} 
(x/\mu))\big]\cap\R$ is the unique solution to $\psi_{\mu}(\nu)=0$ 
and
\begin{equation}
(\forall \mu>0)\quad\psi_{\mu}:\big]0,\eta + \mu f^*(P_{\cdom \, f^*} 
(x/\mu))\big]\to\RX: \nu \mapsto \nu - \eta - \mu 
f^*\left(\prox_{\frac{\nu}{\mu} f^*}\left(\frac{x}{\mu}\right)\right).
\end{equation}
Then, it follows from 
\cite[Lemma~3.3(iii)]{briceno2018proximal} that 
$\phi(\mu)\to \delta-\widetilde{f}
\big(P_{\, \cdom \widetilde{f}} (x,\eta)\big)<0$ as $\mu\downarrow 
0$. Hence, we extend the domain of $\phi$ to $[0,-\delta+ 
	\widetilde{f}(P_{\,\cdom \widetilde{f}}(x,\eta))]$ by defining
\begin{equation}
\label{e:mubar}
\overline{\phi}\colon \mu\mapsto 
\begin{cases}
\delta-\widetilde{f}
\big(P_{\, \cdom \widetilde{f}} (x,\eta)\big),&\text{if}\:\: \mu=0;\\
\phi(\mu),&\text{if}\:\: \mu>0.
\end{cases}
\end{equation}
The following Algorithm~\ref{alg:P_epi} implements a bisection 
procedure to find a zero of the function $\bar{\phi}$ when 
$\widetilde{f}
\big(P_{\, \cdom \widetilde{f}} (x,\eta)\big) > \delta$. Of course, 
there exist
alternative one-dimensional root-finding algorithms able to perform 
this task.

\begin{algorithm}[H]
	\caption{Projection of $(x,\eta,\delta)\in\HH\times\R^2$ onto 
	$\epi \widetilde{f}$ when $\widetilde{f}(P_{\overline{\rm dom} 
	\widetilde{f}}(x,\eta)) >\delta$ with tolerance 
$\epsilon>0$.}
	\label{alg:P_epi}
	\KwData{$(x,\eta,\delta)\in\HH\times\R^2$
	.}
	\KwResult{$(\bar{x},\bar{\eta},\bar{\delta})$.}

	{ Set $\mu^0_-=0$ and $\mu^0_+=\begin{cases}
-\delta + \widetilde{f}
	\big(P_{\, \cdom \widetilde{f}} (x,\eta)\big),&\text{if}\:\:P_{\, 
	\cdom \widetilde{f}} (x,\eta) \in \dom \widetilde{f};\\
N^k,&\text{if}\:\:P_{\, \cdom \widetilde{f}} (x,\eta) \notin \dom 
\widetilde{f},
\end{cases}$\\
where $N>1$ and $k\in\N$ is the first integer satisfying 
$\phi(N^k)>0$ (see Remark~\ref{r:phi}\ref{r:phi_prop}).\\
\vspace{.5\baselineskip}
Set $m=[\log_2(\mu^0_+/ \epsilon)]$.

\For{$n=0$ \KwTo $m$}{
		{
			\vspace{.5\baselineskip}
			$\hat{\mu}^{n+1} = \frac{\mu_+^{n} +\mu_-^{n}}{2}$
			\vspace{.5\baselineskip}
		} \\
		\eIf{$\eta+\hat{\mu}^{n+1}
			f^*\Big(P_{\,\cdom 
			f^*}\big(\frac{x}{\hat{\mu}^{n+1}}\big)\Big)\le 0$ 		
}
{\vspace{.5\baselineskip}
	$(\bar{x},\bar{\eta}) =\big(\prox_{\hat{\mu}^{n+1}(\rec f)}x,0\big)$\\
	\vspace{.5\baselineskip}
\eIf{$\hat{\mu}^{n+1} +\delta 
			-(\rec f)(\bar{x})>0$}
			{
				$\mu_+^{n+1} = \hat{\mu}^{n+1}$
			}
			{
				$\mu_-^{n+1} = \hat{\mu}^{n+1}$
			}
}
{Set $\hat{\nu}^{n+1}$ as the solution to 
		$\psi_{\hat{\mu}^{n+1}}(\nu)=0$.\\
		\vspace{.5\baselineskip}
$(\bar{x},\bar{\eta}) 
=\Big(\hat{\nu}^{n+1}\prox_{\frac{\hat{\mu}^{n+1}}{\hat{\nu}^{n+1}} 
				f}\big(\frac{x}{\hat{\nu}^{n+1}}\big),\hat{\nu}^{n+1}\Big)$\\
				\vspace{.5\baselineskip}
		\eIf{$\eta + \hat{\mu}^{n+1} + \delta - \bar{\eta} 
		f(\bar{x}/\bar{\eta})> 0$}
			{
				$\mu_+^{n+1} = \hat{\mu}^{n+1}$
			}
			{
				$\mu_-^{n+1} = \hat{\mu}^{n+1}$
			}}

%

}
	$\bar{\delta} =\delta + \hat{\mu}^{m+1}$
}
\end{algorithm}		

Next result provides explicit error bounds for 
Algorithm~\ref{alg:P_epi}.

\begin{theorem}
Let $(x,\eta,\delta)\in\HH\times\R^2$ be such that $ 
\widetilde{f}(P_{\,\cdom 
\widetilde{f}}(x,\eta)) >\delta$, let 
$(x^*,\eta^*,\delta^*)=P_{\epi\widetilde{f}}(x,\eta,\delta)$, and let 
$(\bar{x},\bar{\eta},\bar{\delta})$ 
be the vector obtained by
Algorithm~\ref{alg:P_epi}. Then
\begin{equation}
|\delta^*-\bar{\delta}|\le \epsilon\quad \text{and}\quad 
\|(x^*,\eta^*)-(\bar{x},\bar{\eta})\|\leq 
\frac{\epsilon}{\bar{\delta}-\delta}\cdot 
\|(x,\eta)-(\bar{x},\bar{\eta})\|.
\end{equation}
\end{theorem}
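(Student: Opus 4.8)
The plan is to track exactly what the bisection in Algorithm~\ref{alg:P_epi} produces and compare it to the true projection via the structure of Theorem~\ref{t:main_result}. First I would record the setup: by Remark~\ref{r:phi}\ref{r:phi_prop} the function $\overline{\phi}$ from \eqref{e:mubar} is continuous and strictly increasing on $[0,\mu_+^0]$ with $\overline{\phi}(0)=\delta-\widetilde{f}(P_{\,\cdom\widetilde{f}}(x,\eta))<0$, and the choice of $\mu_+^0$ (either the explicit value $-\delta+\widetilde{f}(P_{\,\cdom\widetilde{f}}(x,\eta))$, or $N^k$ with $\phi(N^k)>0$) guarantees $\overline{\phi}(\mu_+^0)\ge 0$. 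Hence there is a unique zero $\mu^*\in(0,\mu_+^0]$, and by Theorem~\ref{t:main_result} we have $(x^*,\eta^*,\delta^*)=(\prox_{\mu^*\widetilde{f}}(x,\eta),\delta+\mu^*)$, so in particular $\delta^*=\delta+\mu^*$ and $(x^*,\eta^*)=\prox_{\mu^*\widetilde{f}}(x,\eta)$.

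Next I would carry out the standard bisection invariant argument. After the loop, the interval $[\mu_-^{n},\mu_+^{n}]$ always brackets the zero $\mu^*$ because the sign tests in the algorithm evaluate precisely $\overline{\phi}(\hat\mu^{n+1})$ (one checks this case by case: in the branch $\eta+\hat\mu^{n+1}f^*(P_{\cdom f^*}(x/\hat\mu^{n+1}))\le 0$ the quantity $\hat\mu^{n+1}+\delta-(\rec f)(\bar x)$ is exactly $\phi(\hat\mu^{n+1})$ by the first line of \eqref{e:fproxf}, and in the other branch $\eta+\hat\mu^{n+1}+\delta-\bar\eta f(\bar x/\bar\eta)$ equals $\hat\mu^{n+1}+\delta-\widetilde{f}(\prox_{\hat\mu^{n+1}\widetilde f}(x,\eta))=\phi(\hat\mu^{n+1})$ by the second line of \eqref{e:fproxf} together with \eqref{e:persp_def}, using $\nu=\bar\eta>0$). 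Since the interval length halves each step and starts at $\mu_+^0$, after $m=[\log_2(\mu_+^0/\epsilon)]$ iterations we get $|\hat\mu^{m+1}-\mu^*|\le \mu_+^0/2^{m+1}\le\epsilon$ (in fact $\le\epsilon/2$, but $\le\epsilon$ suffices), and since $\bar\delta=\delta+\hat\mu^{m+1}$ and $\delta^*=\delta+\mu^*$ this gives $|\delta^*-\bar\delta|=|\hat\mu^{m+1}-\mu^*|\le\epsilon$, the first claimed bound. Also $\bar\delta-\delta=\hat\mu^{m+1}>0$, so the factor in the second bound is well defined.

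For the second, harder, bound I would use nonexpansiveness of the projection onto the closed convex cone $\epi\widetilde f$. Observe that $(\bar x,\bar\eta)=\prox_{\hat\mu^{m+1}\widetilde f}(x,\eta)$, so by Theorem~\ref{t:main_result} the point $(\bar x,\bar\eta,\bar\delta)=(\bar x,\bar\eta,\delta+\hat\mu^{m+1})$ is itself the projection $P_{\epi\widetilde f}(x,\eta,\hat\mu^{m+1}+\delta-\widetilde f(\bar x,\bar\eta)+\delta\,\text{-type shift})$ — more cleanly, $(\bar x,\bar\eta,\bar\delta)\in\epi\widetilde f$ and in fact equals $P_{\epi\widetilde f}$ of some input differing from $(x,\eta,\delta)$ only in the last coordinate; I would then invoke firm nonexpansiveness of $P_{\epi\widetilde f}$ to compare $(\bar x,\bar\eta,\bar\delta)$ with $(x^*,\eta^*,\delta^*)=P_{\epi\widetilde f}(x,\eta,\delta)$. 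The cleanest route: both $(x^*,\eta^*,\delta^*)$ and $(\bar x,\bar\eta,\bar\delta)$ lie in $\epi\widetilde f$ and on the ray structure forces $(x^*,\eta^*)$ and $(\bar x,\bar\eta)$ to be proportional projections; using that $\prox_{\mu\widetilde f}$ along the curve $\mu\mapsto\prox_{\mu\widetilde f}(x,\eta)$ has the property (see \cite[Lemma~3.3]{briceno2018proximal}) that $\widetilde f(\prox_{\mu\widetilde f}(x,\eta))$ is the slope making $\|(x,\eta)-\prox_{\mu\widetilde f}(x,\eta)\|=\mu\,\|(\text{subgradient direction})\|$, one relates the displacement $\|(x,\eta)-(\bar x,\bar\eta)\|$ to $\bar\delta-\delta=\hat\mu^{m+1}$, and similarly $\|(x,\eta)-(x^*,\eta^*)\|$ to $\delta^*-\delta=\mu^*$. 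Then a similar-triangles comparison along these two nested rays, together with $|\mu^*-\hat\mu^{m+1}|\le\epsilon$, yields $\|(x^*,\eta^*)-(\bar x,\bar\eta)\|\le\frac{|\mu^*-\hat\mu^{m+1}|}{\hat\mu^{m+1}}\|(x,\eta)-(\bar x,\bar\eta)\|\le\frac{\epsilon}{\bar\delta-\delta}\|(x,\eta)-(\bar x,\bar\eta)\|$.

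The main obstacle is this last geometric estimate: making rigorous the "similar triangles" relation between the two approximating points on the boundary of $\epi\widetilde f$. The point is that $(x,\eta,\delta)-P_{\epi\widetilde f}(x,\eta,\delta)$ is a normal vector to the cone at the projection, and for the point with last coordinate $\delta+\mu$ the projection is $\prox_{\mu\widetilde f}(x,\eta)$ in the first two coordinates; I would need the monotone dependence of $\mu\mapsto\prox_{\mu\widetilde f}(x,\eta)$ and the identity $\widetilde f(\prox_{\mu\widetilde f}(x,\eta))$ appearing in \eqref{e:fproxf} to pin down the geometry, then estimate the chord between the two boundary points by the difference of the parameters, normalized by the larger parameter $\hat\mu^{m+1}=\bar\delta-\delta$. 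I would state the normal-cone/firm-nonexpansiveness ingredients from \cite{bauschke2011convex} explicitly and keep the triangle comparison to a short computation.
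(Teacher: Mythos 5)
Your first bound is proved correctly and essentially as in the paper: you identify that the sign tests in Algorithm~\ref{alg:P_epi} evaluate $\overline{\phi}(\hat{\mu}^{n+1})$ via the two branches of \eqref{e:fproxf}, that the bracket $[\mu_-^{n},\mu_+^{n}]$ contains the true root $\bar{\mu}$ (with $\delta^*=\delta+\bar{\mu}$ by Theorem~\ref{t:main_result}), and that halving for $m=[\log_2(\mu_+^0/\epsilon)]$ steps gives $|\hat{\mu}^{m+1}-\bar{\mu}|\le\mu_+^0/2^{m+1}\le\epsilon$, hence $|\delta^*-\bar{\delta}|\le\epsilon$. This is the paper's argument.

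The second bound, however, is asserted but not proved, and you flag this yourself as ``the main obstacle.'' You correctly reduce it to comparing $(x^*,\eta^*)=\prox_{\bar{\mu}\widetilde{f}}(x,\eta)$ with $(\bar{x},\bar{\eta})=\prox_{\hat{\mu}^{m+1}\widetilde{f}}(x,\eta)$, but the two routes you sketch do not close the gap. Nonexpansiveness of $P_{\epi\widetilde{f}}$ applied to inputs differing only in the last coordinate compares $(x^*,\eta^*,\delta^*)$ with the projection of $(x,\eta,\delta')$ where $\delta'$ is chosen so that $\hat{\mu}^{m+1}$ solves the corresponding scalar equation; this yields an error controlled by $|\delta-\delta'|=|\overline{\phi}(\hat{\mu}^{m+1})|$, which is not bounded by $\epsilon$ without further work, and in any case does not produce the stated factor $\epsilon/(\bar{\delta}-\delta)$. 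The ``similar triangles along rays'' picture is not justified either: the curve $\mu\mapsto\prox_{\mu\widetilde{f}}(x,\eta)$ is not a ray, and membership of both points in the cone does not force any proportionality. The missing ingredient is simply the standard estimate on the dependence of the proximity operator on its parameter, $\|\prox_{\lambda g}z-\prox_{\mu g}z\|\le|1-\mu/\lambda|\,\|z-\prox_{\lambda g}z\|$ for $g\in\Gamma_0$ (this is \cite[Proposition~23.31]{bauschke2011convex}), applied with $g=\widetilde{f}$, $z=(x,\eta)$, $\lambda=\hat{\mu}^{m+1}$, $\mu=\bar{\mu}$: combined with $|\bar{\mu}-\hat{\mu}^{m+1}|\le\epsilon$ and $\hat{\mu}^{m+1}=\bar{\delta}-\delta$ it gives the claimed inequality in one line, which is exactly what the paper does. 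Without this (or an equivalent monotonicity argument on the subdifferential inclusions $(x,\eta)-\prox_{\lambda\widetilde{f}}(x,\eta)\in\lambda\partial\widetilde{f}(\prox_{\lambda\widetilde{f}}(x,\eta))$ carried out explicitly), your proof of the second bound is incomplete.
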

\begin{proof}
Note that, since $\widetilde{f}(P_{\,\cdom 
\widetilde{f}}(x,\eta)) >\delta$, Algorithm~\ref{alg:P_epi} yields
$\mu^{m+1}_+>0$ and, hence, 
$\bar{\phi}(\mu_-^{m+1})\le 0$ and 
$\bar{\phi}(\mu_+^{m+1}) > 0$, where $\bar{\phi}$ is defined in 
\eqref{e:mubar}. 
Moreover, let $\bar{\mu}>0$ be the unique solution to 
$\bar{\phi}(\mu)=0$. It follows from Theorem~\ref{t:main_result} 
that
$\delta^*=\delta+\bar{\mu}$
and, since \cite[Lemma 3.3(iii)]{bricenoarias2023proximity} asserts 
that $\bar{\phi}$ is continuous, 
$\bar{\mu}\in[\mu_-^{m+1},\mu_+^{m+1}]$.
%

On the other hand, from Algorithm~\ref{alg:P_epi} we get
\begin{equation}
	\label{e:hat}
	(\bar{x},\bar{\eta}) = \prox_{\hat{\mu}^{m+1} 
	\widetilde{f}}(x,\eta)\quad\text{and}\quad 
	\bar{\delta}=\delta + \hat{\mu}^{m+1},
\end{equation}
and, by construction of $\hat{\mu}^{m+1}$,
\begin{equation}
	\label{e:delta_Hat}
	\abs{\delta^* - \bar{\delta}} = \abs{\bar{\mu} - \hat{\mu}^{m+1}} 
	\le \frac{\mu_+^0 - \mu_-^0}{2^{m+1}} \le
	\frac{\mu_+^0}{2^{\log_2(\mu_+^0/\epsilon)}} = 
	\frac{\mu_+^0}{\mu_+^0/\epsilon} = \epsilon,
\end{equation}
which yields the first inequality.
On the other hand, since 
Lemma~\ref{l:persp_prop}\ref{l:persp_prop_i} implies 
$\widetilde{f} \in \Gamma_0(\HH \oplus \R)$, we deduce from 
\eqref{e:hat}, Theorem~\ref{t:main_result},
\cite[Proposition~23.31]{bauschke2011convex}, and 
\eqref{e:delta_Hat} that 
\begin{align}
	\label{e:xeta_hat}
	\norm{(\bar{x},\bar{\eta})-(x^*,\eta^*)} &= 
	\norm{\prox_{\hat{\mu}^{m+1} \widetilde{f}}(x,\eta) - 
	\prox_{\bar{\mu} \widetilde{f}}(x,\eta)} \nonumber\\
&\le \abs{1- 
	\frac{\bar{\mu}}{\hat{\mu}^{m+1}}} \norm{ 
	(x,\eta)-\prox_{\hat{\mu}^{m+1} 
	\widetilde{f}}(x,\eta)} 
\nonumber\\
&= \frac{\epsilon}{\hat{\mu}^{m+1}}\norm{(x,\eta) - 
(\bar{x},\bar{\eta})},
\end{align}
and the result follows from \eqref{e:hat}. 
\end{proof}

Now we provide computations of the projection onto the 
epigraphs of the perspectives of two examples of functions 
$f\in\Gamma_{0}(\R)$. This computations will motivate our 
numerical results in 
Section~\ref{s:4}.

\begin{example}[Exponential cone]
	\label{ex:exp_cone}
Suppose that $\HH=\R$ and $f\colon \xi\mapsto e^\xi$. Then
$\epi{\widetilde{f}}=\mathcal{K}_{\exp}$, which is the exponential 
cone
(see, e.g., \cite[Section~4.2]{Chares2009},
\cite{friberg2023}, and \cite{mosek2022paper,Chares2009} for 
applications). 

Fix $(x,\eta,\delta) \in \R^3$.
In order to compute $P_{\epi \widetilde{f}}(x,\eta,\delta)$, note that 
$f \in \Gamma_0(\R)$ and using \eqref{e:recession} we obtain 
$\rec f = 
\iota_{]-\infty,0]}$, which yields
\begin{equation}
	\label{e:exp_persp}
	\widetilde{f}: \R \times \R \to \RX: (x,\eta) \mapsto \begin{cases}
		\eta e^{\frac{x}{\eta}}, & \text{if}\:\: \eta > 0;\\
		\iota_{]-\infty,0]}(x), & \text{if}\:\: \eta = 0;\\
		+\infty, & \text{if}\:\: \eta < 0.\\
	\end{cases}
\end{equation}
Hence, $\cdom \widetilde{f} = \R \times 
\left[0,+\infty\right[$, $P_{\cdom \widetilde{f}}: (x,\eta) \mapsto (x, 
\max\{0,\eta\})$, and 
\begin{equation}
	\label{e:fpcdom}
	\widetilde{f}(P_{\cdom \widetilde{f}}(x,\eta)) = \begin{cases}
		\eta e^{\frac{x}{\eta}}, & \text{if}\:\: \eta > 0 ; \\
		0, & \text{if}\:\: \eta \le 0\:\:\text{and}\:\:x\le 0; \\
+\infty, & \text{if}\:\: \eta \le 0\:\:\text{and}\:\:x> 0.
	\end{cases} 
\end{equation}
Now, note that
\begin{equation}
	\label{e:exp_cond}
	\widetilde{f} \left(P_{\cdom \widetilde{f}}(x,\eta)\right) \le \delta 
	\quad \Leftrightarrow\quad  \left(\eta > 0 \text{ and }\eta 
	e^{\frac{x}{\eta}} \le \delta\right) \text{ or } \left(\eta \le 0,\:x \le 0, 
	\text{ and } 0 \le \delta\right).
\end{equation}
Altogether, Theorem~\ref{t:main_result} yields
\begin{equation}
	\label{e:e_proj}
P_{\epi \widetilde{f}}(x,\eta,\delta)=
	\begin{cases}
		\big(x,0,\delta\big),&\text{if}\:\: 
		\eta \le 0,\:\:x \le 0, \text{ and } 0 \le \delta;\\[2mm]
		\big(x,\eta,\delta\big),&\text{if}\:\: 
		\eta > 0 \text{ and }\eta e^{\frac{x}{\eta}}\le \delta;\\[2mm]
		\big(\prox_{\mu\widetilde{f}}(x,\eta),\delta + 
		\mu\big),&\text{otherwise},
	\end{cases}
\end{equation}
where $\mu\in\left]0,-\delta+\max\{0,\eta e^{x/\eta}\}\right]$
is the unique solution to
\begin{equation}
	\label{e:mu_e_equal}
	\mu + \delta - \widetilde{f}\big(\prox_{\mu 
	\widetilde{f}}(x,\eta)\big) = 0.
\end{equation}
Next, in order to compute $\prox_{\mu \widetilde{f}}(x,\eta)$ and 
$\widetilde{f}(\prox_{\mu \widetilde{f}}(x,\eta))$ we use 
 Proposition~\ref{p:f_prox}. Since 
\cite[Example~13.2(v)]{bauschke2011convex} asserts that
\begin{equation}
	\label{e:expstar}
	f^*: \R \to \RX: \xi \mapsto \begin{cases}
		\xi \left(\ln \xi -1\right), & \text{if}\:\: \xi > 0;\\
		0, & \text{if}\:\: \xi = 0;\\
		+\infty, & \text{if}\:\: \xi < 0,
	\end{cases}
\end{equation}
we have $\cdom f^* = \RP$, $P_{\cdom f^*}: \xi \mapsto 
\max\{0,\xi\}$, and
\begin{align}
	\label{e:fproxf_cond}
	\eta + \mu f^*\left(P_{\cdom f^*} \left(\frac{x}{\mu}\right)\right) \le 
	0\quad &\Leftrightarrow\quad  (x \le 0 \text{ and } \eta \le 0) 
	\text{ or } 
	\left(x 
	> 0 \text{ and }\eta + x \left(\ln 
	\left(\frac{x}{\mu}\right) -1\right) \le 0\right) \nonumber\\
	& \Leftrightarrow\quad (x \le 0 \text{ and } \eta \le 0) \text{ or } 
	\left(x > 
	0 \text{ and } \mu  \ge x e^{\frac{\eta}{x}-1}\right).
\end{align}
Moreover, it follows from 
\cite[Example~24.39]{bauschke2011convex} that 
\begin{equation}
\label{e:prox_e}
(\forall \gamma \in \RPP)\quad \prox_{\gamma f}(x)=x-W(\gamma 
	e^x)\quad\text{and}\quad 
\prox_{\gamma f^*} (x) = \gamma 
W\left(\frac{e^{x/\gamma}}{\gamma} \right),
\end{equation}
where $W \colon [-1/e, +\infty [ \to [-1, +\infty[$ is the principal 
branch of the Lambert W-function, defined by the inverse of the 
function $\xi \mapsto \xi e^\xi$ on $[-1, +\infty[$ \cite{Corl96}.
In addition, since $\ln\circ W=\ln-W$ and $\prox_{\gamma 
f^*}(x)\in\dom\partial f^*=\RPP$, we obtain from \eqref{e:expstar} 
that
\begin{equation}
(\forall \gamma \in \RPP)\quad f(\prox_{\gamma 
f}(x))=\frac{1}{\gamma}W(\gamma 
e^x)\quad\text{and}\quad  f^*(\prox_{\gamma 
f^*}(x))=(x-\gamma) W\left(\frac{e^{x/\gamma}}{\gamma} 
\right)-\gamma 
\left(W\left(\frac{e^{x/\gamma}}{\gamma} \right)\right)^2.
\end{equation}
Hence, since $\rec f  = \iota_{]-\infty,0]}$, it follows from 
Proposition~\ref{p:f_prox} and \eqref{e:fproxf_cond} that
\begin{equation}
	\label{e:prox_persp_e}
	\prox_{\mu \widetilde{f}}(x,\eta)=\begin{cases}
(0,0),&\text{if}\:\: x > 0 \text{ and } \mu  \ge x 
		e^{\frac{\eta}{x}-1};\\[1mm]
		(x,0), &\text{if}\:\: x \le 0 \text{ and } \eta \le 
		0;\\[1mm]
		\left(x-\nu 
		W\left(\dfrac{\mu}{\nu}e^{\frac{x}{\nu}}\right),\nu\right),
&\text{otherwise,}
	\end{cases}	
\end{equation}
where $\nu>0$ 
is the unique solution to 
\begin{equation}
	\label{e:nu_equal}
	\nu =  \eta + (x-\nu)W\left(\frac{\mu}{\nu} e^{\frac{x}{\nu}}\right) - 
	\nu \left(W\left(\frac{\mu}{\nu} e^{\frac{x}{\nu}}\right)\right)^2,
\end{equation}
which is in $\left]0, \eta + x 
\left(\ln(x/\mu)-1\right)\right]$ if $x>0$ and in 
$\left]0,\eta \right]$ if $x\le 0$.
Furthermore, Proposition~\ref{p:f_prox} and \eqref{e:exp_persp} 
yield
\begin{equation}
	\label{e:fproxf_e}
\widetilde{f}\big(\prox_{\mu \widetilde{f}}(x,\eta)\big)
	=
	\begin{cases}
		0,&\text{if}\:\:
		(x \le 0 \text{ and } \eta \le 0) \text{ or } \left(x > 0 \text{ and } 
		\mu  \ge x e^{\frac{\eta}{x}-1}\right);\\[3mm]
		\frac{\nu^2}{\mu} 
		W\left(\frac{\mu}{\nu}e^{x/\nu}\right),&\text{otherwise},
	\end{cases}
\end{equation}
from which we can solve \eqref{e:mu_e_equal} and get an explicit 
computation for \eqref{e:e_proj}.

\end{example}

\begin{example}[Hyperbolic penalty]
	\label{ex:MBF}
Let $\HH=\R$ and
\begin{equation}
	f:\R\to \RX: \xi \to \begin{cases}
		\dfrac{\xi}{1-\xi}, & \text{if}\:\: \xi < 1; \\
		+\infty, & \text{otherwise.}
	\end{cases}
\end{equation}
This function appears as a hyperbolic penalization for 
constrained optimization problems 
\cite{auslender1997asymptotic} appearing, for instance, in the 
modified Carroll Function 
\cite{carroll1961created,polyak1992modified}. 

Fix $(x,\eta,\delta) \in \R^3$.
In order to compute $P_{\epi \widetilde{f}}(x,\eta,\delta)$, 
note that $f \in \Gamma_0(\R)$ and, using \eqref{e:recession}, we 
obtain $\rec f =\iota_{]-\infty,0]}$. Hence,
 it follows from \eqref{e:persp_def} that
\begin{equation}
	\label{e:mbf_persp}
	\widetilde{f}: \R \times \R \to \RX: (x,\eta) \mapsto \begin{cases}
		\dfrac{\eta x}{\eta - x}, & \text{if }\eta > 0\text{ and } x < 
		\eta;\\[2mm]
		\iota_{]-\infty,0]}(x), & \text{if }\eta = 0;\\[2mm]
		+\infty,&\text{otherwise},
	\end{cases}
\end{equation}
which yields $\cdom \widetilde{f} = \menge{(x,\eta)\in \R^2}{\eta \ge 0 
\text{ 
and } x\le \eta}$,
	\begin{equation}
		\label{e:mbf_pcdom}
P_{\cdom \widetilde{f}}(x,\eta) = 
		\begin{cases}
			(\min\{0,x\}, 0), & \text{if } \eta \le
			0 \text{ and } x \le -\eta;\\[2mm]
			\left(\dfrac{x+\eta}{2}, \dfrac{x+\eta}{2}\right)\!, & \text{if 
			}\abs{\eta} \le x;\\
			(x,\eta), &\text{otherwise},
		\end{cases}
	\end{equation} 
and
\begin{equation}
	\label{e:mbf_fpcdom}
 \widetilde{f}(P_{\cdom 
	\widetilde{f}}(x,\eta)) = \begin{cases}
		0,& \text{if } \eta \le
			0 \text{ and } x \le -\eta;\\[1mm]
		\dfrac{\eta x}{\eta-x}, &\text{if } \eta \ge 
		0\text{ and }  x < \eta;\\
		+\infty, & \text{otherwise}. 
	\end{cases}
\end{equation}
Hence, since
\begin{equation}
	\label{e:mbf_cond}
	\widetilde{f}\left(P_{\cdom \widetilde{f}} (x,\eta)\right)\le \delta \; 
	\ifaf \; \left(\eta \ge 0,\:\: x < \eta, \text{ and } \dfrac{\eta 
	x}{\eta - x} \le \delta\right) \text{ or } \left(\eta \le 0,\:\: x 
	\le -\eta, \text{ and } 0 \le  \delta\right),
\end{equation}
Theorem~\ref{t:main_result} yields
\begin{equation}
	\label{e:mbf_proj}
	P_{\epi\widetilde{f}}(x,\eta,\delta) = \begin{cases}
		(\min\{0,x\},0,\delta), &\text{if } \eta \le 0, \:\: x\le - 
		\eta, 
		\text{ 	and } 0 \le \delta; \\[2mm]
		(x,\eta,\delta) &\text{if } \eta \ge 0,\:\: x < \eta, \text{ 
		and } \dfrac{\eta x}{\eta-x} \le \delta; \\[2mm]
		(\prox_{\mu \widetilde{f}}(x,\eta),\delta + \mu), & 
		\text{otherwise},
	\end{cases}
\end{equation}
where $\mu$ 
is the unique strictly positive solution to
\begin{equation}
	\label{e:mbf_mu_equal}
	\mu + \delta - 
	\widetilde{f}\left(\prox_{\mu\widetilde{f}}(x,\eta)\right) = 0,
\end{equation}
which is in $]0,-\delta + \frac{\eta x}{\eta-x}]$ if 
$\eta 
		\ge 0 \text{ and } x < \eta$ and in $\left]0,-\delta\right]$
if $\eta < 0 \text{ and } x \le - \eta$.
Now, in order to compute $\prox_{\mu \widetilde{f}}(x,\eta)$ and 
$\widetilde{f}(\prox_{\mu \widetilde{f}}(x,\eta))$ we use 
 Proposition~\ref{p:f_prox}. Since 
\cite[Example~13.2(ii)\,\&\,Proposition~13.23(iii)\&(v)]{bauschke2011convex}
 imply that
\begin{equation}
	\label{e:mbf_star}
	f^*\colon \xi \mapsto \begin{cases}
		(\sqrt{\xi} - 1)^2, & \text{if}\:\: \xi \ge 0;\\
		+\infty, & \text{if}\:\: \xi < 0,
	\end{cases}
\end{equation}
we have $\cdom f^* = [0,+\infty[$, $P_{\cdom f^*}: \xi \mapsto 
\max\{0,\xi\}$, and
\begin{equation}
	\label{e:fstarP_mbf}
	(\forall \xi \in \R)\quad f^*(P_{\cdom f^*} \xi) = \begin{cases}
		(\sqrt{\xi}-1)^2, & \text{if}\:\: \xi \ge 0;\\
		1, & \text{if}\:\: \xi < 0.
	\end{cases}
\end{equation}

On the other hand, for every 
$\gamma \in \RPP$, it follows from \eqref{e:prox_char} that 
$\prox_{\gamma f}(x)$ is the unique solution in $]-\infty,1[$ of the 
cubic equation
\begin{equation}
	\label{e:mbf_prox}
(x-p)(1-p)^2 -\gamma = 0
\end{equation}
and from \eqref{e:moreau_decomp} we deduce that
$\prox_{\gamma f^*}(x)$ is the unique solution in 
$]\max\{x-\gamma,0\},+\infty[$ of the cubic 
equation 
\begin{align}
q^3 + 2 (\gamma - x)q^2 + q(\gamma-x)^2  - 
	\gamma^2 =0.
\end{align}
Hence, by noting that
\begin{equation}
	\label{e:mbf_fproxf_cond}
	\eta + \mu f^*\left(P_{\cdom f^*}\left(\dfrac{x}{\mu}\right)\right) 
	\le 0 \; \ifaf \; \eta + 
	\left(\sqrt{\max\{0,x\}}-\sqrt{\mu}\right)^2 \le 0,
\end{equation}
 it 
follows from Proposition~\ref{l:prox_persp} that
\begin{align}
	\label{e:mbf_prox_persp}
	\prox_{\mu \widetilde{f}}(x,\eta) 
	& = \begin{cases}
		\left(\min\{0,x\},0\right), & \text{if}
		\:\: \eta + 
	\left(\sqrt{\max\{0,x\}}-\sqrt{\mu}\right)^2 \le 0; 
		\\[3mm]
		\left(x-\mu \prox_{\frac{\nu}{\mu} 
		f^*}\left(\dfrac{x}{\mu}\right),\nu\right),& \text{otherwise},
	\end{cases}
\end{align}
where $\nu\in\:]0,\eta + 
	(\sqrt{\max\{0,x\}}-\sqrt{\mu})^2]$
is the unique solution to 
\begin{equation}
	\label{e:mbf_nu_equal}
	\nu = \eta + \mu f^*\left(\prox_{\frac{\nu}{\mu}f^*} 
	\left(\frac{x}{\mu}\right)\right) = \eta + \mu 
	\left(\sqrt{\prox_{\frac{\nu}{\mu}f^*}\left(\frac{x}{\mu}\right)}
-1\right)^2.
\end{equation}
Furthermore, by recalling that $\rec f = \iota_{]-\infty,0]}$, 
Proposition~\ref{p:f_prox} yields
\begin{align}
	\label{e:mbf_fproxf}
	\widetilde{f}(\prox_{\mu \widetilde{f}}(x,\eta)) & = \begin{cases}
		0,&\text{if}\:\: \eta + 
		\left(\sqrt{\max\{0,x\}}-\sqrt{\mu}\right)^2 \le 0;\\
		\nu \left(\dfrac{\prox_{\frac{\mu}{\nu} f} 
		\bigg(\dfrac{x}{\nu}\bigg)}{1-\prox_{\frac{\mu}{\nu} f} 
		\bigg(\dfrac{x}{\nu}\bigg)}\right), &\text{otherwise},
	\end{cases}
\end{align}
from which we can solve \eqref{e:mbf_mu_equal} and get an 
explicit 
computation for \eqref{e:mbf_proj}.

\end{example}
\section{Numerical experiments}
\label{s:4}
In this section we provide three numerical tests for the projection 
onto the cone generated by the epigraph of the perspective of 
some 
convex functions. In particular, we consider the exponential and 
hyperbolic cones, whose projections are computed in 
Example~\ref{ex:exp_cone} and Example~\ref{ex:MBF}, 
respectively. In each test, we generate random data using 
Lemma~\ref{l:normal}. 
In every experiment we use python 3.8.16 on 
an Intel i5 CPU at 1.60 GHz and 8GB of RAM.

\begin{test}
	\label{test:exp_cone}
	In the context of Example~\ref{ex:exp_cone}, using \eqref{e:exp_persp} we deduce that
	\begin{align*}
		\epi \widetilde{f}= \menge{(x,\eta,\delta)\in\R\times \RPP 
		\times \R}{\eta e^{\frac{x}{\eta}}\le \delta} \cup ~ ]-\infty,0] 
		\times \{0\} \times \RP,
	\end{align*}
which is the standard exponential cone $\mathcal{K}_{\exp}$ (see, 
e.g., 
\cite[Section~4.2]{Chares2009} and 
\cite{friberg2023}). Note that closed form expressions for 
$P_{\epi\widetilde{f}}$ are 
	available for points in  $\RM 
	\times \RM \times \R$ \cite[Theorem~3.1]{friberg2023}. Then,
	we consider 
	\begin{align}
		\mathcal{R}_1 &\coloneqq \menge{(x,\eta,\delta) \in \R \times 
		\RPP \times \R }{\varepsilon \le \eta \le 20,\:\: 0 \le x \le M 
		\cdot \eta, \:\:\text{and}\:\: \delta = \eta e^{\frac{x}{\eta}}}, \\
		\mathcal{R}_2 &\coloneqq \menge{(x,\eta,\delta) \in \R \times 
		\RPP \times \R }{\varepsilon \le \eta \le 20,\:\: -M \le x \le 
		0,\:\:\text{and}\:\: \delta = \eta e^{\frac{x}{\eta}}},
	\end{align}
which are in the boundary of $\epi \widetilde{f}$.

	These sets were chosen in order to avoid computational issues 
	with very large values of the exponential.
	Next, we set $\varepsilon = 10^{-15}$, $M = 
	10$, and we randomly generate $\{(\widehat{x}^1_i,\widehat{\eta}^1_i, 
	\widehat{\delta}_i^1)\}_{i = 
	1}^N \subset \mathcal{R}_1$ and $\{(\widehat{x}^2_i,\widehat{\eta}^2_i, 
	\widehat{\delta}^2_i)\}_{i = 1}^N 
	\subset 
	\mathcal{R}_2$, with $N =10000$ using the 
	\textit{random.uniform} function of python. Next, for every $i \in 
	\{1,\ldots,N\}$ and $j\in\{1,2\}$, we randomly choose $t^j_i \in 
	\left]0,10\right]$ and we set
	\begin{equation}
		(x_i^j,\eta_i^j, \delta_i^j) = 
		(\widehat{x}^j_i,\widehat{\eta}^j_i, \widehat{\delta}^j_i) + t^j_i \left(e^{\frac{\widehat{x}^j_i}{\widehat{\eta}^j_i}}, 
		\frac{e^{\frac{\widehat{x}^j_i}{\widehat{\eta}^j_i}}(\widehat{\eta}^j_i - \widehat{x}^j_i)}{\widehat{\eta}^j_i},-1\right).
	\end{equation}
	Noting that $\widetilde{f}$ is differentiable in $\R \times \RPP$, 
 we deduce from Lemma~\ref{l:normal} that,
	for every $i \in \{1,\ldots,N\}$ and $j\in\{1,2\}$, 
	$(x_i^j,\eta_i^j, 
	\delta_i^j)\notin \epi \widetilde{f}$ and  $(\widehat{x}^j_i,\widehat{\eta}^j_i, 
	\widehat{\delta}^j_i) 
	= P_{\epi \widetilde{f}}(x_i^j,\eta_i^j, 
	\delta_i^j) \coloneqq \widehat{p_i}^j$.
	
	Next, we approximate 
	$\{\widehat{p_i}^1,\widehat{p_i}^2\}_{i=1}^N$ using our 
	approach and the 
	available free source software SCS \cite{scs2016}. For our 
	approach, for every $i \in 
	\{1,\ldots,N\}$ and $j\in\{1,2\}$, we compute the unique solution
$\mu_i^j\in\RPP$ to	\eqref{e:mu_e_equal} with $(x_i^j,\eta_i^j, 
	\delta_i^j)$ by using \textit{scipy.optimize.} 
	\textit{root\_scalar} \cite{scipy} 
	with Brent's method \cite[Section~9.3]{press2007numerical} 
	with tolerance $\epsilon_1=10^{-9}$. Moreover, following 
	\eqref{e:e_proj}, we 
	define 
	our approximated projection
	\begin{align}
		\label{e:e_proj_approx1}
		p_i^j = 
		\begin{cases}
			\big(x_i^j,0,\delta_i^j\big),&\text{if}\:\: 
			\eta_i^j \le 0,\:\:x_i^j \le 0,
			\text{ and } 0 \le \delta_i^j;\\[2mm]
			\big(x_i^j,\eta_i^j,
\delta_i^j\big),&\text{if}\:\: 
			\eta_i^j > 0 \text{ and }\eta_i^j 
			e^{\frac{x_i^j}{\eta_i^j}}\le 
			\delta_i^j;\\[2mm]
			\Big(\prox_{\mu_i^j\widetilde{f}}(x_i^j,
\eta_i^j),\delta_i^j + 
			\mu_i^j\Big),&\text{otherwise}.
		\end{cases}
	\end{align}

	On the other hand, for every $i \in \{1,\ldots,N\}$ and 
	$j\in\{1,2\}$, we denote
by $p_i^{sj}$ the approximation of $\widehat{p_i}^j$ using the 
solver SCS  with the same tolerance $\epsilon_1$ over duality 
gap, primal, and dual residuals \cite{scs2016}.

In Table~\ref{tab:R1} we exhibit the average and standard 
deviation of the errors $\{\|p_i^j - 
\widehat{p_i}^j\|_2\}_{i=1}^N$,
$\{\|p_i^{sj} - \widehat{p_i}^j\|_2\}_{i=1}^N$ for $j \in \{1,2\}$, and provide the average of the 
computational time used for each approximation. 
We consider the standard 2-norm $\|\cdot\|_2$ in $\R^3$.
We observe that our method is less precise and is a bit slower 
than SCS in region $\mathcal{R}_1$, while we observe a 
significant improvement in precision and computational time 
 with respect to SCS in region $\mathcal{R}_2$.
We attribute this difference in the numerical behavior to 
inefficiences on the resolution of the scalar equations in 
our approach when the exponential achieves very high 
values for points generated from region $\mathcal{R}_1$.

	\begin{table}[ht]
		\centering
		\begin{tabular}{l cc cc}
			\toprule
			 & \multicolumn{2}{c}{$\mathcal{R}_1$} & \multicolumn{2}{c}{$\mathcal{R}_2$}\\
			 \cmidrule[\heavyrulewidth](lr){2-3} \cmidrule[\heavyrulewidth](lr){4-5}
			Approach &  \eqref{e:e_proj_approx1} & SCS 
			\cite{scs2016} & \eqref{e:e_proj_approx1} & SCS 
			\cite{scs2016} \\
			\midrule[\heavyrulewidth]
			Error Average       & 3.12e-03  & 2.22e-05 & 8.85e-14 &  4.69e-08 \\
			Error St. Deviation & 9.25e-03 & 6.21e-05 & 1.50e-13 &  1.42e-06 \\
			Av. comput. time [ms] &  100.70  &   60.20 & 21.35 &     77.80 \\
			\bottomrule
		\end{tabular}
		\caption{Average and standard deviation for the errors and 
		average time (in milliseconds) in the 
		computation of projections 
		on $\mathcal{R}_1$ and $\mathcal{R}_2$, using our 
		approach and 
		SCS \cite{scs2016}, when $N$ = 10000.}
		\label{tab:R1}
	\end{table}

\end{test}

In order to replicate the numerical comparison on 
Test~\ref{test:exp_cone} in higher dimension, the next numerical 
test is a radial version of the former using 
Proposition~\ref{p:fproxf_radial}.

\begin{test}
In this test we use Proposition~\ref{p:fproxf_radial} with 
$\varphi=e^{|\cdot|}$ and $\HH = \R^{n}$, where $n = 10000$. In 
addition, we deduce from 
\eqref{e:persp_def} and \eqref{e:recession} that
\begin{equation}
	\widetilde{f}:\R^{n} \times \R\to\RX\colon (x,\eta) \mapsto 
	\begin{cases}
		\eta e^{\frac{\|x\|}{\eta}}, & \text{if} \:\: \eta > 0;\\
		0, & \text{if}\:\: \eta = 0 \text{ and } x = 0; \\
		+\infty, & \text{if}\:\: \eta < 0,
	\end{cases}
\end{equation}
and
\begin{equation}
	\epi\widetilde{f} = \menge{(x,\eta,\delta) \in \R^{n} \times \RPP 
	\times \R}{\eta e^{\frac{\|x\|}{\eta}} \le \delta} \cup \{(0,0)\} \times 
	\RP.
\end{equation}
Note that this cone is related to the exponential cone via 
$$(x,\eta,\delta) \in \epi{\widetilde{f}}\quad\text{if and only if}\quad  
(\|x\|,\eta,\delta)\in\mathcal{K}_{\exp}.$$

In order to generate our synthetic data,
we consider the following subset of the 
boundary of $\epi{\widetilde{f}}$,
\begin{equation}
	\mathcal{R}_3 \coloneqq \menge{(x,\eta,\delta) \in \R^n \times 
	\RP \times \R}{\varepsilon \le \eta \le 10, \:\:0 < \norm{x} \le M 
	\cdot \eta, \text{ and } \delta = \eta e^{\frac{\norm{x}}{\eta}}}.
\end{equation}
Next, 
we set $N =1000$, $\varepsilon = 1$, and $M = 5$ in 
$\mathcal{R}_3$, in order to avoid numerical issues with very 
large values for the exponential function. For every $i\in\{1,\ldots,N\}$, we 
randomly generate
$(\widehat{x}_i,\widehat{\eta}_i,\widehat{\delta}_i)\in \mathcal{R}_3$ and $t_i 
\in\left]0,1\right[$, and we set
\begin{equation}
	(x_i,\eta_i, \delta_i) = (\widehat{x}_i,\widehat{\eta}_i, \widehat{\delta}_i) + t_i \left(\frac{\widehat{x}_i}{\norm{\widehat{x}_i}} e^{\frac{\norm{\widehat{x}_i}}{\widehat{\eta}_i}}, e^{\frac{\norm{\widehat{x}_i}}{\widehat{\eta}_i}} \left(1-\frac{\norm{\widehat{x}_i}}{\widehat{\eta}_i}\right),-1\right).
\end{equation}
Since $\widetilde{f}$ is differentiable in $\R^n\setminus\{0\}
\times \RPP$, Lemma~\ref{l:normal} implies that $(x_i,\eta_i, \delta_i) \notin \epi \widetilde{f}$ and $(\widehat{x}_i,\widehat{\eta}_i, 
\widehat{\delta}_i) = P_{\epi \widetilde{f}}(x_i,\eta_i, 
\delta_i) \coloneqq \widehat{p_i}$.
Next, in view of Proposition~\ref{p:fproxf_radial}, for every $i \in 
\{1,\ldots,N\}$, we approximate $\widehat{p_i}$ by setting
\begin{equation}
	\label{e:p_i_radial}
	p_i = 
		\begin{cases}
(x_i,\max\{\eta_i,0\},\delta_i),&\text{if}\:\:
\widetilde{\varphi}(\|x_i\|,\max\{\eta_i,0\})\le \delta_i;\\[1mm]
(0,0,0),&\text{if}\:\:
\widetilde{\varphi}(\|x_i\|,\max\{\eta_i,0\})> \delta_i,\:\:
\delta_i<0,\:\:\text{and}\:\:\eta_i-\delta_i\varphi^*(\frac{\|x_i\|}{-\delta_i})\le 
0;\\[1mm]
\left(0,\frac{\eta_i-\delta_i\varphi^*(0)}{1+(\varphi^*(0))^2},
\frac{\varphi^*(0)(\delta_i\varphi^*(0)-\eta_i)}{1+(\varphi^*(0))^2}\right),
&\text{if}\:\:\widetilde{\varphi}(\|x_i\|,\max\{\eta_i,0\})> 
\delta_i,\:\:\:\:\eta_i-\delta_i\varphi^*(0)>0,
\:\:\text{and}\:\:x_i= 0;\\[2mm]
\left(\prox_{\frac{\mu_i}{\nu_i}\varphi}
\left(\frac{\|x_i\|}{\nu_i}\right)\frac{\nu_i 
	x_i}{\|x_i\|},\nu_i,\delta_i+\mu_i\right),&\text{otherwise},
	\end{cases}
\end{equation}
where $(\mu_i,\nu_i) \in \RPP^2$ is the approximate solution of 
\eqref{e:mu_radial}-\eqref{e:nu_radial} using mainly Nelder-Mead 
algorithm \cite{NM} in the library 
\textit{scipy.optimize.minimize} \cite{scipy} for minimizing the 
quadratic residual of the scalar equations. We consider 
$\epsilon_2=5\cdot10^{-10}$ as tolerance for the stopping criterion 
in the 
resolution of the scalar system. On the other hand, for 
every $i \in \{1,\ldots,N\}$, we denote by $p_i^{s}$ to the 
approximation of $\widehat{p_i}$ using the solver SCS 
\cite{scs2016} with the same tolerance $\epsilon_2$ over duality 
gap, primal, and dual residuals.

In Table~\ref{tab:exp_norm} we exhibit the average and standard 
deviation of the errors $\left\{\norm{p_i - 
\widehat{p_i}}_2\right\}_{i=1}^N$, together with the average 
computational time to achieve the tolerane $\epsilon_2$. We 
observe that our method is several orders of magnitude more 
precise than 
SCS with a similar 
computational time, in average. 
	\begin{table}[H]
		\centering
		\begin{tabular}{lcc}
			Approach &  \eqref{e:p_i_radial} & SCS\\
			\toprule
			Error Average       & 9.55e-14 & 2.90e-09 \\
			Error St. Deviation & 2.23e-13 & 8.47e-09\\
			Av. comput. time [ms] &  29.97 & 28.03\\
			\bottomrule
		\end{tabular}
		\caption{
Average and standard deviation for the errors and 
		average time (in milliseconds) in the 
		computation of projections 
		on $\mathcal{R}_3$, using our 
		approach and 
		SCS \cite{scs2016}, when $N = 1000$.}
		\label{tab:exp_norm}
	\end{table}
	
\end{test}
Finally, we provide an experiment for the projection onto a cone 
generated by the epigraph of the perspective of a hyperbolic 
penalty function, which cannot be computed by the available 
conic solvers.
\begin{test}
	In the context of Example~\ref{ex:MBF}, using \eqref{e:mbf_persp} we deduce that
	\begin{align*}
		\epi \widetilde{f}= \menge{(x,\eta,\delta)\in\R\times \RPP \times \R}{\frac{\eta x}{\eta-x}\le \delta \text{ and } x < \eta} \cup ~ ]-\infty,0] \times \{0\} \times \RP .
	\end{align*}
In order to generate our synthetic data, we set $\varepsilon = 
10^{-15}$ and consider
	\begin{equation}
		\mathcal{R}_4 = \menge{(x,\eta,\delta) \in \R^3 }{\varepsilon \le \eta \le 100,-100 \le x < \eta \:\:\text{and}\:\: \delta = \frac{\eta x}{\eta-x}},
	\end{equation}
which is a 
subset of the boundary of $\epi \widetilde{f}$.
We randomly generate $\{(\widehat{x_i},\widehat{\eta_i}, 
\widehat{\delta_i})\}_{i = 1}^N \in 
	\mathcal{R}_4$ with $N =10000$. Similarly to the previous tests, for every $i \in \{1,\ldots,N\}$, we randomly chose $t_i \in 
	\left]0,10\right]$ and set
	\begin{equation}
		(x_i,\eta_i, \delta_i) = (\widehat{x_i},\widehat{\eta_i}, \widehat{\delta_i}) + t_i \left(\frac{\widehat{\eta_i}^2}{(\widehat{\eta_i}-\widehat{x_i})^2},\frac{-\widehat{x_i}^2}{(\widehat{\eta_i}-\widehat{x_i})^2},-1\right).
	\end{equation}
	
	Noting that $\widetilde{f}$ is differentiable in the interior of its domain, we deduce form Lemma~\ref{l:normal} that,
	for every $i \in \{1,\ldots,N\}$, $(x_i,\eta_i, 
	\delta_i)\notin \epi \widetilde{f}$, and $ (\widehat{x_i},\widehat{\eta_i}, 
	\widehat{\delta_i}) = P_{\epi \widetilde{f}}(x_i,\eta_i, 
	\delta_i) \coloneqq \widehat{p_i}$. 
	
	Next, we approximate $\{\widehat{p_i}\}_{i=1}^N$ using our approach. Furthermore, for every $i \in \{1,\ldots,N\}$, we denote by 
	$\mu_i \in \RPP$ to the approximation of the unique solution of \eqref{e:mu_e_equal} for
	$(x_i,\eta_i, \delta_i)$ using \textit{scipy.optimize.root\_scalar} \cite{scipy} 
	with Brent's method \cite[Section~9.3]{press2007numerical} 
	with tolerance $\epsilon_3=10^{-12}$. Then, using 
	\eqref{e:mbf_proj} and \eqref{e:mbf_prox_persp}, for every $i \in 
	\{1,\ldots,N\}$, we define
	\begin{equation}
		\label{e:mbf_proj_approx}
		p_i = 
		\begin{cases}
			\big(\min\{0,x_i\},0,\delta_i\big),&\text{if}\:\: 
			\eta_i \le 0,\:x_i \le -\eta_i,\text{ and } 0 \le \delta_i;\\[2mm]
			\big(x_i,\eta_i,\delta_i\big),&\text{if}\:\: 
			\eta_i \ge 0,\:x_i < \eta_i, \text{ and } \frac{\eta_i x_i}{\eta_i- 
			x_i}\le \delta_i;\\[2mm]
			\big(\prox_{\mu_i \widetilde{f}}(x_i,\eta_i),\delta_i + 
			\widehat{\mu_i}\big),&\text{otherwise}.
		\end{cases}
	\end{equation}
	
	In Table~\ref{tab:mbf} we exhibit the average and standard 
	deviation of the errors $\left\{\norm{\widehat{p_i} - 
	p_i}_2\right\}_{i=1}^N$. We also provide the average 
	computational time  in milliseconds used for each approximation.
We observe a very high precision in reasonable computational 
time as in former tests. 
	\begin{table}[H]
		\centering
		\begin{tabular}{lc}
			Approach &  \eqref{e:mbf_proj_approx} \\
			\toprule
			Error Average       & 3.48e-12 \\
			Error St. Deviation & 2.27e-10 \\
			Av. comput. time [ms] &  21.56\\
			\bottomrule
		\end{tabular}
		\caption{Average and standard deviation for the errors and 
		average computational time in the approximations made for 
		the projections of points generated by elements of 
		$\mathcal{R}_4$, using our approach when $N = 10000$.}
\label{tab:mbf}
	\end{table}
	
\end{test}
\section{Conclusions}
\label{s:5}
In this paper we provide an efficient computation for the projection 
onto the epigraph of the perspective of any lower semicontinuous 
convex function defined in a real Hilbert space. Our approach 
relies in the resolution of two 
coupled scalar equations, which can be solved with high precision.
We implement our formula in the case of the exponential cone and 
the hyperbolic cone, and we compare our approach with a 
state-of-the-art software in python.

\textbf{Acknowledgments.} The work of Luis M. Brice\~no-Arias is 
supported by  Centro de Modelamiento Matem\'atico (CMM), 
FB210005, BASAL fund for centers of excellence, and grants 
FONDECYT 1230257 and MATHAmSud 24-MATH-17 
from ANID-Chile. The work of Crist\'obal Vivar-Vargas was 
supported by Universidad T\'ecnica Federico Santa Mar\'ia by the 
grant Programa de Incentivo a la Investigaci\'on Cient\'ifica (PIIC).

\end{document}